\renewcommand\eqref[1]{(\ref{#1})} %Need with hyperref
 \newtheorem{thm}{Theorem}[section]
 \newtheorem{cor}[thm]{Corollary}
 \newtheorem{lem}[thm]{Lemma}
 \newtheorem{prop}[thm]{Proposition}
 \theoremstyle{definition}
 \newtheorem{defn}[thm]{Definition}
 \theoremstyle{remark}
 \newtheorem{rem}[thm]{Remark}
 \numberwithin{equation}{section}
\newcommand{\half}{\frac{1}{2}}
\newcommand{\ene}{\mathbb{N}}
\newcommand{\noi}{\noindent}
\newcommand{\ar}{\mathbb{R}}
\newcommand{\ce}{\mathbb{C}}
\newcommand{\zet}{\mathbb{Z}}
\newcommand{\ern}{{\mathbb{R}}^n}
\newcommand{\arn}{{\mathbb{R}}^n}
\newcommand{\bi}{\begin{itemize}}
\newcommand{\ei}{\end{itemize}}
\newcommand{\be}{\begin{enumerate}}
\newcommand{\ee}{\end{enumerate}}
\newcommand{\beq}{\begin{equation}}
\newcommand{\eq}{\end{equation}}
\newcommand{\lap}{\mathcal{L}_G}
\newcommand{\cdxi}{\ce^{d_{\xi}\times d_{\xi}}}
\newcommand{\ernt}{\mathbb{R}^n\times\mathbb{R}^n}
\def\p#1{{\left({#1}\right)}}
\def\jp#1{{\left\langle{#1}\right\rangle}}
\def\Rep{{{\rm Rep}}}
\def\Op{{{\rm Op}}}
\def\D{\mathbb D}
\def\rhodiff{\mbox{$\triangle\!\!\!\!\ast\,$}}
\DeclareMathOperator{\Tr}{Tr}
\DeclareMathOperator{\rank}{rank}
\DeclareMathOperator*{\esssup}{ess\,sup}
\def\Gh{{\widehat{G}}}
\def\HS{{\mathtt{HS}}}
\def\Rn{{{\mathbb R}^n}}
\def\Tn{{{\mathbb T}^n}}
\def\Zn{{{\mathbb Z}^n}}
\def\SU2{{{\rm SU(2)}}}
\def\SO3{{{\rm SO(3)}}}
\def\lapsu2{{{\mathcal L}_\SU2}}
\def\rhodiff{\mbox{$\triangle\!\!\!\!\ast\,$}}
\def\Re{{{\rm Re}\,}}
\def\Op{\text{\rm Op}}
\begin{document}

%---------------------------------------------------------------------------
%Insert here the title, affiliations and abstract:
%
\title[$L^p$-bounds for pseudo-differential operators]{$L^p$-bounds for pseudo-differential operators on compact Lie groups}
%----------Author 1
\author[Julio Delgado]{Julio Delgado}

%\author[]{Julio Delgado.}

\address{Department of Mathematics\\
Imperial College London\\
180 Queen's Gate, London SW7 2AZ\\
United Kingdom}

\email{j.delgado@imperial.ac.uk}

\thanks{The first author was supported by the
Leverhulme Research Grant RPG-2014-02.
The second author was supported by
 the EPSRC Grant EP/K039407/1. No new data was collected or generated during the course of research.}

%----------Author 2
\author{Michael Ruzhansky}

\address{Department of Mathematics\\
Imperial College London\\
180 Queen's Gate, London SW7 2AZ\\
United Kingdom}

\email{m.ruzhansky@imperial.ac.uk}

%----------classification, keywords, date
\subjclass[2010]{Primary 35S05; Secondary 22E30, 47G30.}

\keywords{Compact Lie groups, pseudo-differential operators, $L^p$ bounds. }

\date{\today}

%%% ----------------------------------------------------------------------
\begin{abstract}
Given a compact Lie group $G$, in this paper we establish $L^p$-bounds for pseudo-differential operators in $L^p(G)$.
  The criteria here are given in terms of the concept of matrix symbols defined on the non-commutative analogue of the phase space $G\times\widehat{G}$, where $\widehat{G}$ is the unitary dual of $G$. We obtain two different types of $L^p$ bounds: first for finite regularity symbols and second for smooth symbols. The conditions for smooth symbols are formulated using $\mathscr{S}_{\rho,\delta}^m(G)$ classes which are a suitable extension of the well known $(\rho,\delta)$ ones on the Euclidean space. The results herein extend classical $L^p$ bounds established by C. Fefferman on $\arn$.
  While Fefferman's results have immediate consequences on general manifolds for $\rho>\max\{\delta,1-\delta\}$, our results do not require the condition $\rho>1-\delta$. Moreover, one of our results also does not require $\rho>\delta$. Examples are given for the case of SU(2)$\cong\mathbb S^3$ and vector fields/sub-Laplacian operators when operators in the classes $\mathscr{S}_{0,0}^m$
  and $\mathscr{S}_{\frac12,0}^m$ naturally appear, and where conditions $\rho>\delta$ and $\rho>1-\delta$ fail, respectively.

\end{abstract}

%%% ----------------------------------------------------------------------
\maketitle
%%% ----------------------------------------------------------------------
\tableofcontents

\section{Introduction}
In this work we study the $L^p$ boundedness of pseudo-differential operators on compact Lie groups. The investigation of the behaviour of pseudo-differential operators of H\"ormander's class $S_{\rho,\delta}^m$ in $L^p$ is a fundamental problem in the theory of pseudo-differential operators. The fact that the class $S_{1,\delta}^0$ begets  bounded operators on $L^p$ for every $1<p<\infty$ is well known (e.g. \cite[Ch. 13]{tay:pde3}). The boundedness on $L^p(\ern)$ for all $1<p<\infty$ fails for general operators with symbols in  $S_{\rho,\delta}^{0}(\ernt)$ with $\rho<1$. Furthermore, when $m>0$ is small, for operators with symbols 
 in $S_{\rho,\delta}^{-m}(\ernt)$ with $\rho<1$ one can only get $L^p(\ern)$ boundedness for finite intervals centered at $p=2$, which is a consequence of C. Fefferman's estimates (cf. \cite{fe:lp}) and the work on multipliers of Hirschman (e.g. \cite{hi:mult}) and Wainger (cf. \cite{wai:trig}). The obstruction for the boundedness on $L^p(\ern)$ for all $1<p<\infty$ of operators in ${\rm Op}S_{\rho,\delta}^{0}(\ernt)$ with $\rho<1$ is explained in a more general setting by the works of Richard Beals \cite{be:lpn} and \cite{be:lp}. The C. Fefferman's results were extended to symbols with finite regularity by Li and Wang in \cite{wa-li:lp}. A version of $L^p$ Fefferman type bounds in the setting of $S(m,g)$ classes has been established in \cite{j:t}. 

The $L^p$ boundedness on compact groups for invariant operators (Fourier multipliers) with symbols of finite regularity has been studied in \cite{Ruzhansky-Wirth:Lp-FAA, Ruzhansky-Wirth:Lp-Z}. The case of the circle has been considered in \cite{mws:s1}.

The $(\rho,\delta)$ classes $\mathscr{S}_{\rho,\delta}^m(G)$ on compact Lie groups with $0\leq\delta\leq\rho\leq 1$ $(\delta<1)$  have been  introduced in \cite{rt:book} and then in \cite{Ruzhansky-Turunen:JFA-Garding} motivated by the study of sharp G\aa rding inequalities, and subsequently developed in \cite{Ruzhansky-Turunen-Wirth:arxiv, Ruzhansky-Wirth:Lp-FAA, Ruzhansky-Wirth:Lp-Z, Ruzhansky-Wirth:functional-calculus, vf:psecl}. 

In this paper we first establish $L^p$ bounds for finite regularity symbols by applying multiplier  results of \cite{Ruzhansky-Wirth:Lp-FAA} and \cite{Ruzhansky-Wirth:Lp-Z}. Secondly, we extend Fefferman's bounds to compact Lie groups obtaining some improvement with respect to the range of $(\rho, \delta)$ from the point of view of pseudo-differential operators on compact manifolds.  Our analysis will be based on the global 
quantization developed in \cite{rt:book} and \cite{rt:groups} as a noncommutative analogue 
of the Kohn-Nirenberg quantization of operators on $\Rn$.  The classes $\mathscr{S}_{\rho,\delta}^m(G)$ on a compact Lie group $G$ extend
 the corresponding H\"ormander ones when $G$ is viewed as a manifold. The advantage here is that we will not impose the usual restriction 
 $1-\delta\leq\rho$ when dealing with those classes on manifolds. Thus, here we will allow $\rho\leq \half$ and $\rho=\delta$.

In order to illustrate our main results we first recall the $L^{p}(\arn)$ bounds obtained by C. Fefferman (\cite{fe:lp}). In the following theorem, we denote by $\sigma(x,D)$ the pseudo-differential operator with symbol $\sigma(x,\xi)$, i.e. 
$$\sigma(x,D)f(x)=\int_{\Rn} e^{2\pi i x\cdot\xi} \sigma(x,\xi) \widehat{f}(\xi) d\xi.$$

\medskip
\noindent {\bf{Theorem A.}}\\
{
\textit{{\em(a)} Let $0\leq\delta<\rho<1$ and $\nu<n(1-\rho)/2$. Let
$\sigma=\sigma (x,\xi)\in
S_{\rho,\,\delta}^{-\nu }(\ern)$.
Then $\sigma(x,D)$ is a bounded operator from $L^p(\ern)$ to 
$L^p(\ern)$ for 
$$\left|\half-\frac{1}{p}\right|\leq \frac{\nu}{n(1-\rho)}.$$ 
{\em(b)} If $\left|\half-\frac{1}{p}\right|>\frac{\nu}{n(1-\rho)}$, then the operator
$\sigma(x,D)$ associated to the symbol
\beq\sigma(x,\xi)=\sigma_{\rho,\nu}(\xi)=e^{i|\xi|^{1-\rho} }\langle\xi\rangle^{-\nu}\in S_{\rho,\,0}^{-\nu }, \label{hlhw}\eq
is not bounded from $L^p(\ern)$ to $L^p(\ern)$.\\
{\em(c)} Let $\sigma=\sigma(x,\xi)\in S_{\rho,\,\delta}^{-n(1-\rho)/2}(\ern).$ Then 
$\sigma(x,D)$ does not have to be bounded on $L^1(\ern)$ but is bounded from the Hardy space $H^1(\ern)$ to $L^1(\ern)$.}}

\medskip
The part (a) can be deduced from (c) by complex interpolation. The part (b) corresponds to the classical counter-example due to Hardy-Littlewood-Hirschman-Wainger (cf. \cite{az:st}, \cite{hi:mult}, \cite{wai:trig}). The complex interpolation and the duality $(H^1)'=BMO$ obtained
by  Stein and Fefferman in \cite{fe:hp} reduce the proof of (c) to the estimation of $L^{\infty}-BMO$ bounds. We note that the conditions on the symbol in Theorem A restrict the choice of the parameter $\rho$ depending on the order of the symbol. Part (b) shows that part (a) is sharp with respect to the size of the interval around $p=2$. Moreover the sharpness of the choice of the value $\rho$ is explained by an estimate due to H\"ormander \cite{ho:psym}. 
In this paper, one of our main results will give an analogue of Theorem A on compact Lie groups. 
\medskip

%For the results on finite regularity conditions we use the $L^2$ boundedness on compact Lie groups for symbols with finite smoothness proved in \cite{rt:book}. We also obtained $L^p$ bounds for $C^{\infty}$ smooth symbols via complex interpolation of a suitable family of analytic operators.
In Section 2 we recall basic elements of the theory of pseudo-differential operators on compact Lie groups.
 In Section 3 we establish our main results on $L^p$ boundedness, we consider two types of conditions, the first ones imposing finite regularity on the symbol and the second ones for $C^{\infty}$-smooth symbols.

\medskip
To give a taste of our results we state two of our main theorems. 
Here, we rely on the global noncommutative analogue of the Kohn-Nirenberg quantization 
\eqref{EQ:A-quant} on a compact Lie group $G$ developed in \cite{rt:book, rt:groups} providing a one-to-one correspondence between matrix symbols $\sigma$ on the noncommutative phase space $G\times \widehat{G}$ and the corresponding operators $A\equiv\sigma(x,D)$ given by 
\begin{equation}\label{EQ:A-quant0}
Af(x)\equiv \sigma(x,D)f(x)=\sum\limits_{[\xi]\in \widehat{G}}d_{\xi}\Tr(\xi(x)\sigma(x,\xi)\widehat{f}(\xi)).
\end{equation}
We refer to Section \ref{SEC:Prelim} for the precise definitions of the appearing objects.

The following limited regularity result corresponds to
 Theorem \ref{thpr}. The notation ${\mathbb D}_{\xi}$ will indicate a suitable difference operator with respect to the discrete unitary dual.

\begin{thm}\label{THM1}
Let $G$ be a compact Lie group of dimension $n$, and 
let $0\leq\delta,\rho\leq 1$. Denote by $\kappa$  the smallest even integer larger than $\frac{n}{2}$.  Let $1<p<\infty$ and $\ell>\frac{n}{p}$ with $\ell\in\ene$. Let $A:C^{\infty}(G)\rightarrow\mathcal{D}'(G)$ be a linear continuous operator such that its matrix symbol $\sigma$ satisfies
\beq\label{aieq1aa}\|\partial_x^{\beta}{\mathbb D}_{\xi}^{\alpha}\sigma(x,\xi)\|_{op}\leq C_{\alpha, \beta}\langle\xi\rangle^{-m_0-\rho|\alpha|+\delta|\beta|}\eq
with $$m_0\geq\kappa(1-\rho)\left|\frac{1}{p}-\half\right|+\delta([\frac{n}{p}]+1),$$
for all multi-indices $\alpha, \beta$ with $|\alpha|\leq \kappa$, $|\beta|\leq\ell$ and for all $x\in G$ and $[\xi]\in \widehat{G}$. Then the operator $A$ is bounded from $L^{p}(G)$ to $L^p(G)$. 
\end{thm}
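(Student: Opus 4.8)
The plan is to reduce the pseudo-differential operator $A$ to a family of Fourier multipliers (invariant operators) by freezing the spatial variable, and then to invoke the $L^p$ multiplier theorems of \cite{Ruzhansky-Wirth:Lp-FAA, Ruzhansky-Wirth:Lp-Z}. For each fixed $y\in G$, introduce the invariant operator $A_y$ whose symbol is the frozen symbol $\sigma(y,\xi)$, namely
\[
A_yf(x)=\sum_{[\xi]\in\widehat G}d_\xi\Tr\big(\xi(x)\sigma(y,\xi)\widehat f(\xi)\big),
\]
so that the full operator is recovered on the diagonal, $Af(x)=(A_xf)(x)$. Writing $F(x,y)=A_yf(x)$, the task becomes to estimate $\|x\mapsto F(x,x)\|_{L^p(G)}$ in terms of the symbol data, uniformly over the frozen parameter $y$.

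First I would exploit the smoothness in $x$ through a Sobolev embedding in the frozen variable. Fix the smallest integer $\ell_0=[\frac{n}{p}]+1>\frac{n}{p}$; since $W^{\ell_0,p}(G)\hookrightarrow C(G)$ whenever $\ell_0>n/p$, one has the pointwise bound $|F(x,x)|\le\sup_{y}|F(x,y)|\lesssim\|F(x,\cdot)\|_{W^{\ell_0,p}_y}$. Raising to the $p$-th power, integrating in $x$, and interchanging the order of integration reduces the problem to uniform multiplier estimates,
\[
\|Af\|_{L^p(G)}^p\lesssim\sum_{|\beta|\le\ell_0}\int_G\|\partial_y^\beta A_yf\|_{L^p(G)}^p\,dy,
\]
so it suffices to bound each $\partial_y^\beta A_y$ on $L^p(G)$ uniformly in $y$ for $|\beta|\le\ell_0$. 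Here the hypothesis $\ell>\frac{n}{p}$ enters precisely as the Sobolev threshold, and the admissible order loss is governed by the largest number of $x$-derivatives needed, $\ell_0=[\frac{n}{p}]+1$.

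Next I would recognise $\partial_y^\beta A_y$, for fixed $y$, as the invariant operator with symbol $\tau^\beta_y(\xi)=\partial_y^\beta\sigma(y,\xi)$. By hypothesis \eqref{aieq1aa} this symbol satisfies $\|{\mathbb D}_\xi^\alpha\tau^\beta_y(\xi)\|_{op}\le C_{\alpha\beta}\langle\xi\rangle^{-(m_0-\delta|\beta|)-\rho|\alpha|}$ for all $|\alpha|\le\kappa$, with constants independent of $y$; that is, $\tau^\beta_y$ is a multiplier symbol of order $-(m_0-\delta|\beta|)$ of $\rho$-type with $\kappa$ admissible differences, uniformly in $y$. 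Applying the $L^p$ multiplier theorem of \cite{Ruzhansky-Wirth:Lp-FAA, Ruzhansky-Wirth:Lp-Z} in its Fefferman-type form then yields $\|\partial_y^\beta A_y\|_{L^p\to L^p}\le M_\beta<\infty$ uniformly in $y$, provided the order dominates the Fefferman threshold, $m_0-\delta|\beta|\ge\kappa(1-\rho)\big|\frac1p-\frac12\big|$. Since $|\beta|\le\ell_0=[\frac np]+1$, the assumed lower bound on $m_0$ guarantees this for every relevant $\beta$, and summing the finitely many terms gives $\|A\|_{L^p\to L^p}<\infty$.

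The main obstacle is this last step: verifying that the available multiplier results deliver exactly the Fefferman-type order threshold $\kappa(1-\rho)\big|\frac1p-\frac12\big|$, rather than merely the order-zero H\"ormander--Mikhlin condition, and that the resulting multiplier norms are uniform in the frozen variable $y$. The uniformity is immediate because the constants $C_{\alpha\beta}$ in \eqref{aieq1aa} do not depend on $y$; the quantitative $\big|\frac1p-\frac12\big|$ dependence, however, must be extracted from the multiplier theorem, ultimately by interpolating the elementary $L^2$ bound (controlled by the symbol sup-norm) against the H\"ormander--Mikhlin endpoint across a dyadic decomposition in $\langle\xi\rangle$, which is where the factor $(1-\rho)$ is produced.
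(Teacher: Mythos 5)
Your proposal is correct and follows essentially the same route as the paper's own proof: freeze the $x$-variable to obtain invariant operators $A_y$, apply Sobolev embedding in the frozen variable with $[\frac{n}{p}]+1$ derivatives plus Fubini, and then invoke the Fefferman-type multiplier bound of Ruzhansky--Wirth (\cite[Corollary 5.1]{Ruzhansky-Wirth:Lp-Z}, packaged in the paper as Theorem \ref{lepr}) uniformly in $y$. The only cosmetic difference is that the paper phrases the frozen operators via right-convolution kernels $f\mapsto f\ast\partial_y^{\gamma}r_A(y)$ and separately records the reduction from the $W^{p,r}\to L^p$ multiplier statement to the $L^p\to L^p$ one, which you absorb into the citation.
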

Here $[\frac{n}{p}]$ denotes the integer part of $\frac{n}{p}$.

For smooth symbols we will prove the following  theorem which corresponds to Theorem \ref{tci}. In particular, we will not impose
 the well known restrictions $\rho>\frac12$ or $\rho\geq 1-\delta$ when dealing with pseudo-differential operators on closed manifolds as an advantage of the 
 global calculus employed here.

\begin{thm}\label{THM2}
Let $G$ be a compact Lie group of dimension $n$. Let $0\leq  \delta<\rho<1$ and $$0\leq\nu <\frac{n(1-\rho)}{2}.$$ Let $\sigma\in \mathscr{S}_{\rho,\delta}^{-\nu}(G)$. Then $\sigma(x,D)$ extends to a bounded operator from $L^{p}(G)$ to $L^{p}(G)$ for $$\left|\frac{1}{p}-\half\right|\leq \frac{\nu}{n(1-\rho)} .$$ 
\end{thm}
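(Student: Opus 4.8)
The plan is to follow C.~Fefferman's strategy adapted to the group setting: reduce the statement to a single endpoint estimate at the critical order $\nu=n(1-\rho)/2$ together with the elementary $L^2$ bound at order $\nu=0$, and then recover the whole interval by complex interpolation and by the embedding of symbol classes. Since the boundary case of the hypothesis is $|1/p-1/2|=\nu/(n(1-\rho))$, the two endpoints of the interpolation correspond to $|1/p-1/2|=0$ (i.e. $p=2$) and $|1/p-1/2|=1/2$ (i.e. the Hardy space endpoint).

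First I would record the two endpoints. At $\nu=0$ a symbol in $\mathscr{S}_{\rho,\delta}^0(G)$ with $\delta<1$ produces an $L^2(G)$-bounded operator by the Calder\'on--Vaillancourt type theorem available for the global calculus; this gives $|1/p-1/2|=0$. The crux is the opposite endpoint: for $\sigma\in\mathscr{S}_{\rho,\delta}^{-n(1-\rho)/2}(G)$ I would prove that $\sigma(x,D)$ maps $H^1(G)\to L^1(G)$, or equivalently, using the Fefferman--Stein duality $(H^1)'=BMO$ on the space of homogeneous type $G$ (compact, hence doubling), that it maps $L^\infty(G)\to BMO(G)$; this corresponds to $|1/p-1/2|=1/2$.

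For the endpoint I would argue on atoms. Writing the kernel of $\sigma(x,D)$ as the inverse group Fourier transform of the symbol in the quantization \eqref{EQ:A-quant0}, I would perform a Littlewood--Paley decomposition $\sigma=\sum_j\sigma_j$ with $\sigma_j$ supported where $\langle\xi\rangle\sim 2^j$. For each dyadic piece I would derive off-diagonal kernel bounds of the form $|K_j(x,y)|\lesssim 2^{j(n-\nu)}(1+2^{\rho j}\,d(x,y))^{-N}$, where $d$ is the geodesic distance, obtained by converting the difference operators ${\mathbb D}_\xi$ appearing in the defining estimates of $\mathscr{S}_{\rho,\delta}^{-\nu}(G)$ into decay in $d(x,y)$ at the scale $2^{-\rho j}$; the hypothesis $\delta<\rho$ is exactly what makes these genuine gains. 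Given an $H^1$-atom $a$ supported in a ball $B(x_0,r)$ with $\int_G a=0$, I would split $\int_G|\sigma(x,D)a|\,dx$ into the local part over $2B$, controlled by Cauchy--Schwarz and the $L^2$ bound, and the far part, controlled by pairing the cancellation of $a$ against $K_j(x,y)-K_j(x,x_0)$ and summing over $j$. The balance between the number of relevant scales and the size of each kernel piece is precisely what forces the critical order $\nu=n(1-\rho)/2$.

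With both endpoints in hand I would interpolate. Embedding $\sigma$ in an analytic family $\{\sigma_z\}$ (say $\sigma_z=\langle\xi\rangle^{\nu-(n(1-\rho)/2)z}\,\sigma$, so that $\sigma_0\in\mathscr{S}^0_{\rho,\delta}$ is $L^2$-bounded with seminorms of polynomial growth along $\Re z=0$, while $\Re z=1$ lands in the endpoint class), Stein's complex interpolation between the $L^2\to L^2$ and the $L^\infty\to BMO$ (equivalently $H^1\to L^1$) bounds yields $L^p\to L^p$ boundedness exactly when $|1/p-1/2|=\nu/(n(1-\rho))$. Finally, since $\mathscr{S}_{\rho,\delta}^{-\nu}(G)\subset\mathscr{S}_{\rho,\delta}^{-\nu'}(G)$ for $0\le\nu'\le\nu$, applying the equality case with $\nu'=n(1-\rho)|1/p-1/2|$ upgrades the conclusion to the full inequality $|1/p-1/2|\le\nu/(n(1-\rho))$ in the statement. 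I expect the main obstacle to be the endpoint kernel estimate: producing the correct off-diagonal decay at scale $2^{-\rho j}$ uniformly in $x$ from the difference-operator bounds, and organizing the Littlewood--Paley sum so that the pieces assemble to exactly the critical order, all within the noncommutative quantization rather than through a local Euclidean reduction.
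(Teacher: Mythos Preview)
Your overall architecture matches the paper's: an endpoint estimate at the critical order $-n(1-\rho)/2$, the $L^2$ bound at order $0$, Stein's complex interpolation with an analytic family $\sigma_z=e^{z^2}\sigma(x,\xi)\langle\xi\rangle^{\nu+\frac{n(1-\rho)}{2}(z-1)}$ (the paper includes the harmless factor $e^{z^2}$ to tame the seminorm growth along vertical lines), and duality to pass to $1<p\le2$.

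Where you diverge is in the endpoint. You propose the atomic $H^1\to L^1$ route with pointwise kernel bounds $|K_j(x,y)|\lesssim 2^{j(n-\nu)}(1+2^{\rho j}d(x,y))^{-N}$. The paper instead proves the dual $L^\infty\to BMO$ estimate directly, following Fefferman's original argument: fixing a ball $B(x_0,r)$, it splits $\sigma=\sigma^0+\sigma^1$ according to whether $\langle\xi\rangle\lesssim r^{-1}$ or $\langle\xi\rangle\gtrsim r^{-1}$. The low-frequency part $\sigma^0$ is handled by differentiating in $x$, decomposing dyadically, and invoking a key lemma (Lemma~\ref{l1wa}) that gives uniform $L^\infty\to L^\infty$ bounds for pieces supported in $\langle\xi\rangle\sim R$; summing the geometric series and applying the mean value theorem controls the oscillation over $B$. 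For the high-frequency part $\sigma^1$ the paper introduces a cut-off $\phi$ adapted to $B$ with Fourier support in $\langle\xi\rangle\le (Cr)^{-1/(1-\rho)}$, writes $\phi\,\sigma^1(x,D)f=\sigma^1(x,D)(\phi f)+[\phi,\sigma^1(x,D)]f$, estimates the first term by factoring through $(1-\mathcal L_G)^{n(1-\rho)/2}$ and using $L^2$ boundedness together with a Sobolev-norm count for $\phi$, and treats the commutator via a Taylor expansion on the group and another application of Lemma~\ref{l1wa}.

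The practical difference is that the paper never needs the pointwise kernel estimate you propose. Its Lemma~\ref{l1wa} only uses \emph{integrated} kernel bounds obtained from Parseval and the difference-operator inequalities, i.e.\ $\|\mathcal F_G^{-1}\sigma(x,\cdot)\|_{L^1(G)}\le C$ for each dyadic piece, which is robust in the noncommutative setting. Your pointwise bound, while morally correct, is the step most likely to cause trouble: converting difference-operator decay into pointwise kernel decay on a compact Lie group requires more than Parseval and is where the noncommutativity bites. If you pursue your route, you would be safer replacing the pointwise bound by an $L^1$-type estimate over annuli (which suffices for the atomic argument) and deriving it exactly as in Lemma~\ref{l1wa}.
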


Let us compare the statement of Theorem \ref{THM2} on compact Lie groups with Theorem A on $\mathbb R^{n}$. First we can observe that Theorem A readily yields the corresponding $L^{p}$-boundedness result on a general compact manifold $M$ for pseudo-differential operators with symbols in class $S^{-\nu}_{\rho,\delta}$. However, for these classes to be invariantly defined on $M$ one needs the condition $\rho\geq 1-\delta$ (see e.g. \cite{shubin:r}). Together with condition $\rho>\delta$ this implies, in particular, that $\rho>\frac12$.  Therefore, 
Theorem A implies the statement of Theorem \ref{THM2} under the additional assumption that $\rho\geq 1-\delta$ (and hence also $\rho>\frac12$), see also Remark \ref{eq4a} for the relation between operators in these symbol classes. Thus, the main point of Theorem \ref{THM2} is to establish the $L^{p}$-boundedness without this restriction. This is possible due to the global symbolic calculus available thanks to $G$ being a group. We point out that the condition $\delta\leq\rho$ has been also removed for the $L^2$-boundedness on $\arn$ by J. Hounie \cite{hou:l2}.

\section{Motivation and applications}

Let us give several examples of one type of applications and relevance of the obtained results. Let $G={\rm SU(2)}\simeq\mathbb S^{3}$ be equipped with the usual matrix multiplication of SU(2), or with the quaternionic product on $\mathbb S^{3}$. Let $X,Y,Z$ be three left-invariant vector fields, orthonormal with respect to the Killing form. Then we have the following properties, established in 
\cite{Ruzhansky-Turunen-Wirth:arxiv}:
\begin{itemize}
\item[(i)] Let ${\mathcal L}_{sub}=X^{2}+Y^{2}$ be the sub-Laplacian (hypoelliptic by H\"ormander's sum of squares theorem). Then its parametrix ${\mathcal L}_{sub}^{\sharp}$ has symbol in the symbol class $\mathscr{S}_{\frac12,0}^{-1}(G)$.
\item[(ii)] Let $\mathcal H=X^{2}+Y^{2}-Z$, it is also hypoelliptic by H\"ormander's sum of squares theorem. Then its parametrix $H^{\sharp}$ has symbol in the symbol class $\mathscr{S}_{\frac12,0}^{-1}(G)$.
\item[(iii)] The operator $Z+c$ is globally hypoelliptic if and only if ${\mathrm i}c\not\in\frac12\mathbb Z$. In this case its inverse $(Z+c)^{-1}$ exists and has symbol in the symbol class $\mathscr{S}_{0,0}^{0}(G)$.
\end{itemize}
We note that especially in the case (iii), the class $\mathscr{S}_{0,0}^{0}(G)$ is not invariantly defined in local coordinates while our global definition makes sense. For examples (i) and (ii), the class $\mathscr{S}_{\frac12,0}^{-1}(G)$ in local coordinates gives the H\"ormander class
$\mathscr{S}_{\frac12,\frac12}^{-1}(\mathbb R^{3})$. Consequently, Theorem A can not be applied since the condition $\rho>\delta$ is not satisfied in this case. Nevertheless, the results obtained in this paper apply, for example Theorem \ref{THM1} works for the class $\mathscr{S}_{0,0}^{0}(G)$, and both Theorem \ref{THM1} and Theorem \ref{THM2} for the class 
$\mathscr{S}_{\frac12,0}^{-1}(G)$. 
Thus, this can be used to derive a-priori estimates in $L^{p}$ Sobolev spaces $W^{p,s}$, for example
$$
\|f\|_{L^p(\mathbb S^3)} \le C_p 
  \|(Z+c)f\|_{W^{p,2|\frac1p-\frac12|}(\mathbb S^3)},\quad
  1<p<\infty,
$$
or
 $$\|f\|_{L^{p}(\mathbb S^3)}\leq 
C_p\|\mathcal L u\|_{W^{p,|\frac1p-\frac12|-1}(\mathbb S^3)}, \quad
  1<p<\infty,
$$ 
and
$$\|f\|_{L^{p}(\mathbb S^3)}\leq C_p \| \mathcal H u
\|_{W^{p,|\frac1p-\frac12|-1}(\mathbb S^3)},\quad
  1<p<\infty.
$$ 

\medskip
The above examples show that similarly to the introduction of the classes $S^m_{\rho,\delta}$ by H\"ormander in the analysis of hypoelliptic operators on $\Rn$, the classes $\mathscr{S}_{\rho,\delta}^{m}(G)$ also appear in the analysis of (already) Fourier multipliers on Lie groups. Moreover, if a Lie group is acting on a homogeneous manifolds $G/K$, the Fourier analysis on $G$ gives rise to the Fourier analysis on $G/K$ in terms of class I representations of $G$.

We refer to \cite{Ruzhansky-Turunen-Wirth:arxiv} and 
\cite{Ruzhansky-Wirth:Lp-Z} for other examples of the appearance of the globally defined classes 
$\mathscr{S}_{\rho,\delta}^{-\nu}(G)$ in the context of compact Lie groups, also for non-invariant operators, but let us give one explicit example here. 

Let $f\in C^\infty(G)$ be a smooth function on $G$ and let $\mathcal L$ be the Laplacian on $G$. Consider the (Schr\"odinger type) evolution problem
\begin{equation}\label{EQ:cp}
\left\{\begin{array}{rl}
i\partial_t u + f(x) (1-{\mathcal L})^{\delta/2}u & =0, \\
u(0)&= I.
\end{array}
\right.
\end{equation}
Then, modulo lower order terms, the main term of its solution operator can be seen as a pseudo-differential operator with symbol 
$$\sigma_t(x,\xi)= e^{it f(x)\jp{\xi}^\delta},$$
where $\jp{\xi}$ stands for the eigenvalue of the elliptic operator $(1-{\mathcal L})^{1/2}$ corresponding to the representation $\xi$. One can check, for example using the functional calculus from \cite{Ruzhansky-Wirth:functional-calculus}, that $\sigma \in \mathscr{S}_{\rho,\delta}^{0}(G)$, with $\rho=1-\delta$. In particular, we may have $\rho\leq \delta$, depending on the range of $\delta$ in \eqref{EQ:cp}. 

This example can be extended further if we take $\mathcal L$ in \eqref{EQ:cp} to be a sub-Laplacian, since the matrix symbol of the sub-Laplacian can be also effectively controlled, see e.g. \cite{Garetto-Ruzhansky:JDE}, wth further dependence on indices, as e.g. already in the case of $\mathbb S^3$ in (i) above.

There are other examples of problems that can be effectively treated by the global calculus rather than by localisations of the classical H\"ormander calculus. For example, let ${\mathcal L}_{sub}$ be a sub-Laplacian on $G$, i.e. a sum of squares of left-invariant vector fields satisfying H\"ormander's commutator condition of order $r$. If we consider the Cauchy problem for the corresponding wave equation
\begin{equation}\label{EQ:c2}
\partial_t^2 u-a(t) {\mathcal L}_{sub} u=0,
\end{equation}
even with smooth function $a>0$, it is weakly hyperbolic and its local analysis, while involving the microlocal structure of the sub-Laplacian, is rather complicated. However, the problem \eqref{EQ:c2} can be effectively analysed using the global techniques of pseudo-differential operators on groups. Thus, such results have been obtained in 
\cite{Garetto-Ruzhansky:JDE} with sharp regularity estimates (depending on the H\"ormander commutation order $r$) for the solutions of the Cauchy problem for \eqref{EQ:c2} allowing $a\geq 0$ to be also of H\"older regularity. The $x$-dependent pseudo-differential operators would appear if we allow $a$ to also depend on $x$.

There is a variety of other problems in analysis that require the control of lower order terms of the operator which are not provided by the classical theory of pseudo-differential operators using localisations but can be controlled using the global theory of pseudo-differential operators on groups or on homogeneous spaces. For example, using such techniques, estimates for the essential spectrum of operators on compact homogeneous manifolds have been obtained in \cite{dar:goh}, spaces of Gevrey functions and ultradistributions have been described in \cite{dr:gevrey} relating them with the representation theory of the group acting on the space, Besov and other function spaces have been related to the representation theory of groups in \cite{NRT-Pisa}. It should be noted that many of the developed techniques work not only on groups but also on compact homogeneous manifolds $G/K$ via class I representations of the compact Lie group $G$, thus covering the cases of real, complex or quaternionic spheres, projective spaces, and many other settings. 

Furthermore, many techniques can be extended to non-compact situations, notably those of nilpotent Lie groups, see \cite{FR-book}. Nilpotent Lie groups, in turn, have a wide range of applications to various problems involving differential operators and equations on general manifolds due to the celebrated lifting techniques of Rothschild and Stein \cite{Rothschild-Stein:AM-1976}. We refer to \cite{FR-book} for further explanations and examples of this (nilpotent) setting.

%$\sigma(x,D)$ extends to a bounded operator from $L^{p}(G)$ to $L^{p}(G)$ for 
%\[\left|\frac{1}{p}-\half\right|\leq \frac{\nu}{\dim G}.\]
%As a consequence and in contrast to the non-compact case $G=\arn$, where the class $S_{\rho,\,\delta}^{-n/2}(\ern)$ only yields $L^p-L^p$ boundedness for all $1<p<\infty$ if $\rho=1$. Here we will show that $Op\mathscr{S}_{\rho,\delta}^{-\frac{\dim\, G}{2}}(G)\subset\mathcal{L}(L^p(G))$ for  all $1<p<\infty$.

%The $S_{\rho,\delta}^m$ H{\"o}rmander's classes can be defined on manifolds using charts, in the case of the torus the equivalence of this local and the global definition has been proved by W. McLean \cite{mc:per}. A different approach to obtain that  equivalence based on extension and periodisation techniques was developed by M. Ruzhansky and V. Turunen \cite{rt:book}.\\

%been obtained in  \cite{anv:lp}, \cite{lw:lp} on $\ern$; the references \cite{j:t}, \cite{j:london} consider a non-homogeneous setting. $L^p$-
\section{Preliminaries}  
\label{SEC:Prelim}

In this section we recall some basic facts about the theory of pseudo-differential operators on compact Lie groups and we refer to 
\cite{rt:book} and \cite{rt:groups} for a comprehensive account of such topics.  
\medskip

Given a compact Lie group $G$, we equip it with the normalised Haar measure $\mu\equiv dx$ on the Borel $\sigma$-algebra associated to the topology of 
the smooth manifold $G$. The Lie algebra of $G$ will be denoted by $\mathfrak{g}$. We also denote by $\widehat{G}$ the set of equivalence classes of continuous irreducible unitary 
representations of $G$ and by $\Rep(G)$ the set of all such representations. Since $G$ is compact, the set $\widehat{G}$ is discrete.  
For $[\xi]\in \widehat{G}$, by choosing a basis in the representation space of $\xi$, we can view 
$\xi$ as a matrix-valued function $\xi:G\rightarrow \ce^{d_{\xi}\times d_{\xi}}$, where 
$d_{\xi}$ is the dimension of the representation space of $\xi$. 
By the Peter-Weyl theorem the collection
$$
\left\{ \sqrt{d_\xi}\,\xi_{ij}: \; 1\leq i,j\leq d_\xi,\; [\xi]\in\Gh \right\}
$$
is an orthonormal basis of $L^2(G)$.
If $f\in L^1(G)$ we define its global Fourier transform at $\xi$ by 
\begin{equation}\label{EQ:FG}
\mathcal F_G f(\xi)\equiv \widehat{f}(\xi):=\int_{G}f(x)\xi(x)^*dx.
\end{equation}
Thus, if $\xi$ is a matrix representation, 
we have $\widehat{f}(\xi)\in\ce^{d_{\xi}\times d_{\xi}} $. The Fourier inversion formula is a consequence
 of the Peter-Weyl theorem, so that
\beq \label{EQ:FGsum}
f(x)=\sum\limits_{[\xi]\in \widehat{G}}d_{\xi} \Tr(\xi(x)\widehat{f}(\xi)).
\eq
Given a sequence of matrices $a(\xi)\in\mathbb C^{d_\xi\times d_\xi}$, we can define
\begin{equation}\label{EQ:FGi}
(\mathcal F_G^{-1} a)(x):=\sum\limits_{[\xi]\in \widehat{G}}d_{\xi} \Tr(\xi(x) a(\xi)),
\end{equation}
 where the series can be interpreted in a distributional sense or absolutely depending on the growth of 
(the Hilbert-Schmidt norms of) $a(\xi)$. For a further discussion we refer the reader to \cite{rt:book}.

For each $[\xi]\in \widehat{G}$, the matrix elements of $\xi$ are the eigenfunctions for the Laplacian $\mathcal{L}_G$ 
(or the Casimir element of the universal enveloping algebra), with the same eigenvalue which we denote by 
$-\lambda^2_{[\xi]}$, so that
\begin{equation}\label{EQ:Lap-lambda}
-\mathcal{L}_G\xi_{ij}(x)=\lambda^2_{[\xi]}\xi_{ij}(x)\qquad\textrm{ for all } 1\leq i,j\leq d_{\xi}.
\end{equation} 
The weight for measuring the decay or growth of Fourier coefficients in this setting is 
$\jp{\xi}:=(1+\lambda^2_{[\xi]})^{\half}$, the eigenvalues of the elliptic first-order pseudo-differential operator 
$(I-\mathcal{L}_G)^{\half}$.
The Parseval identity takes the form 
\begin{equation}\label{EQ:Parseval}
\|f\|_{L^2(G)}= \left(\sum\limits_{[\xi]\in \widehat{G}}d_{\xi}\|\widehat{f}(\xi)\|^2_{\HS}\right)^{1/2},\quad
\textrm{ where }
\|\widehat{f}(\xi)\|^2_{\HS}=\Tr(\widehat{f}(\xi)\widehat{f}(\xi)^*),
\end{equation}
which gives the norm on 
$\ell^2(\widehat{G})$. 

For a linear continuous operator $A$ from $C^{\infty}(G)$ to $\mathcal{D}'(G) $ 
we define  its {\em matrix-valued symbol} $\sigma(x,\xi)\in\cdxi$ by 
\begin{equation}\label{EQ:A-symbol}
\sigma(x,\xi):=\xi(x)^*(A\xi)(x)\in\cdxi,
\end{equation}
where $A\xi(x)\in \cdxi$ is understood as $$(A\xi(x))_{ij}=(A\xi_{ij})(x),$$ i.e. by 
applying $A$ to each component of the matrix $\xi(x)$.
Then one has (\cite{rt:book}, \cite{rt:groups}) the global quantization
\begin{equation}\label{EQ:A-quant}
Af(x)=\sum\limits_{[\xi]\in \widehat{G}}d_{\xi}\Tr(\xi(x)\sigma(x,\xi)\widehat{f}(\xi))\equiv\sigma(x,D)f(x),
\end{equation}
in the sense of distributions, and the sum is independent of the choice of a representation $\xi$ from each 
equivalence class 
$[\xi]\in \widehat{G}$. If $A$ is a linear continuous operator from $C^{\infty}(G)$ to $C^{\infty}(G)$,
the series \eqref{EQ:A-quant} is absolutely convergent and can be interpreted in the pointwise 
sense. The symbol $\sigma$ can be interpreted as a matrix-valued
function on $G\times\widehat{G}$.
We refer to \cite{rt:book, rt:groups} for the consistent development of this quantization
and the corresponding symbolic calculus. If the operator $A$ is left-invariant then its symbol
$\sigma$ does not depend on $x$. We often call such operators simply invariant.

The following inequality will be useful (e.g. \cite[Theorem 12.6.1]{rt:book}): For $A,B\in \ce^{n\times n}$, we have
\beq\label{ophsi} \|AB\|_{\HS}\leq \|A\|_{op}\|B\|_{\HS},\eq
where $\|\cdot\|_{op}$ denotes the operator norm of the matrix $A$.

Our criteria will be formulated in terms of norms of the matrix-valued symbols. In order to justify their appearance, we recall that if $A\in\Psi_{\rho, \delta}^m(G)$ on a
compact Lie group $G$ is a pseudo-differential operators in H\"ormander's class
$\Psi_{\rho, \delta}^m(G)$, i.e. if all of its localisations to $\Rn$ are pseudo-differential operators with
symbols in the class $S^m_{\rho,\delta}(\Rn)$, then the matrix-symbol of $A$ satisfies
$$\|\sigma(x,\xi)\|_{op}\leq C \jp{\xi}^m \qquad \textrm {for all } x\in G,\; [\xi]\in\Gh.$$
Here $\|\cdot\|_{op}$ denotes the operator norm of the matrix
multiplication by the matrix $\sigma(x,\xi).$
For this fact, see e.g. 
\cite[Lemma 10.9.1]{rt:book} or \cite{rt:groups} in the $(1,0)$ case. For the complete
characterisation of H\"ormander classes $\Psi_{1,0}^m(G)$ in terms of matrix-valued symbols
see also \cite{Ruzhansky-Turunen-Wirth:arxiv}. In particular, this motivates the
usage of the operator norms of the matrix-valued symbols. 

We say that $Q_{\xi}$ is a {\em difference operator} of order $k$ if it is given by
\[Q_{\xi}\widehat{f}(\xi)=\widehat{q_{Q}f}(\xi)\]
for a function $q=q_{Q}\in C^{\infty}(G)$ vanishing of order $k$ at the identity $e\in G$, i.e., $$(P_x q_Q)(e)=0$$ for all left-invariant differential operators $P_x\in \mbox{Diff }^{k-1}(G)$ of order $k-1$. We denote the set of all difference operators of order $k$ by $\mbox{diff }^k(\widehat{G})$. For a given function $q\in C^{\infty}(G)$ it will convenient to denote the associated difference operator, acting on Fourier coefficients, by
\begin{equation}\label{EQ:dq}
\Delta_q\widehat{f}(\xi):=\widehat{qf}(\xi).
\end{equation}

\begin{defn} A collection of $k$ first order difference operators $$\Delta_1,\dots, \Delta_k\in\mbox{diff }^1(\widehat{G})$$ is called {\em admissible}, if the corresponding functions $q_1,\dots, q_k\in C^{\infty}(G)$ satisfy $$\nabla{q_j}(e)\neq 0,\, j=1,\dots, k,$$ 
and 
$$\rank(\nabla{q_1}(e),\dots,\nabla{q_k}(e))=\dim\,G.$$
In particular, the group unit element $e$ is an isolated common zero of the family $\{q_j\}_{j=1}^k$. An admissible collection is called 
{\em strongly admissible} if
\[\bigcap\limits_{j=1}^k\{x\in G:q_j(x)=0\}=\{e\}.\]
\end{defn}
For a given admissible selection of difference operators on a compact Lie group $G$ we use the multi-index notation
\[\Delta_{\xi}^{\alpha}:=\Delta_{1}^{\alpha_1}\cdots\Delta_{k}^{\alpha_k}\mbox{ and }q^{\alpha}(x):=q_{1}(x)^{\alpha_1}\cdots q_{k}(x)^{\alpha_k}.\] 

\begin{defn} Let $\{Y_j\}_{j=1}^{\dim\, G}$ be a basis for the Lie algebra of $G$, and let $\partial_j$ be the left-invariant first order differential operators corresponding to $Y_j$. For $\alpha\in \ene_0^n$, we denote $$\partial^{\alpha}=\partial_1^{\alpha_1}\dots\partial_n^{\alpha_n}.$$ We will use the notation $\partial_x^{\alpha}$ for $\partial^{\alpha}$.
\end{defn}

Let $\Delta_1,\dots, \Delta_k\in\mbox{diff }^1(\widehat{G})$ be a strongly admissible collection 
of first order difference operators.

Let $0\leq\delta\leq\rho\leq 1$ ($\delta<1$). We will say that a matrix-valued symbol $\sigma(x, \xi)$ belongs to $\mathscr{S}_{\rho,\delta}^m(G)$ if it is smooth in $x$,  and for all multi-indices $\alpha, \beta$ there exists  a constant $C_{\alpha,\beta}>0$ such that
\[\|\Delta_{\xi}^{\alpha}\partial_x^{\beta}\sigma(x,\xi)\|_{op}\leq C_{\alpha,\beta}\langle\xi\rangle^{m-\rho|\alpha|+\delta|\beta|},\]
 holds uniformly in $x$ and $\xi\in\Rep(G).$ 
 If $\delta<\rho$, this class is independent of  a strongly admissible collection 
 $\Delta_1,\dots, \Delta_k\in\mbox{diff }^1(\widehat{G})$ of difference operators.
Given a non-negative integer $l$ we associate a seminorm $\|\sigma\|_{l;\mathscr{S}_{\rho,\delta}^m}$ defined by 
\[\|\sigma\|_{l;\mathscr{S}_{\rho,\delta}^m}:=\sup\limits_{|\alpha|+|\beta|\leq l, (x,\xi)}\frac{\|\Delta_{\xi}^{\alpha}\partial_x^{\beta}\sigma(x,\xi)\|_{op}}{\langle\xi\rangle^{m-\rho|\alpha|+\delta|\beta|}}.\]

\begin{rem}\label{eq4a} 
If the group $G$ is viewed as a manifold, the localised H\"ormander class of operators is 
denoted by $\Psi^m_{\rho, \delta}(G, loc)$. In 
 \cite{Ruzhansky-Turunen-Wirth:arxiv} for $(\rho,\delta)=(1,0)$, and then in
\cite[Corollary 8.13]{vf:psecl} for more general $\rho$ and $\delta$, it has been shown that the class of operators $\Psi^m_{\rho, \delta}$ generated by the symbol class $\mathscr{S}_{\rho,\delta}^m(G)$, for $0\leq\delta\leq\rho\leq 1$ ($\delta<1$) coincides with the H\"ormander class of operators $\Psi^m_{\rho, \delta}(G, loc)$ for $0\leq\delta<\rho\leq 1$ and $1-\delta\leq\rho$. \end{rem}

There is also a particular family of difference operators associated to representations that we will need and that we now describe following \cite{Ruzhansky-Wirth:Lp-FAA,Ruzhansky-Wirth:Lp-Z}.
Such difference operators play an important role in the Mihlin multiplier theorem proved in the above papers.

For a fixed irreducible representation $\xi_0$ let us  
define the (matrix-valued) difference operator
$${}_{\xi_0}\mathbb D=
({}_{\xi_0}\mathbb D_{ij})_{i,j=1,\ldots,d_{\xi_0}}$$ 
corresponding to the matrix elements of the matrix-valued function
$\xi_0(x)-\mathrm I$. In other words, if we set $$q_{ij}(x):=\xi_0(x)_{ij}-\delta_{ij}$$ with $\delta_{ij}$ the Kronecker delta,
and use the definition in \eqref{EQ:dq},
then $${}_{\xi_0}\mathbb D_{ij}=\Delta_{q_{ij}}.$$
If the representation is fixed, 
we may omit the index $\xi_0$ for brevity. For a sequence of 
difference operators of this type,
$$\D_1={}_{\xi_1}\D_{i_1 j_1},
\D_2={}_{\xi_2}\D_{i_2 j_2}, \ldots,
\D_k={}_{\xi_k}\D_{i_k j_k},$$ 
with $[\xi_m]\in \widehat G$, $1\leq i_m,j_m\leq d_{\xi_m}$,
$1\leq m\leq k$,
we define
$$\D^\alpha=\D_1^{\alpha_1}\cdots \D_k^{\alpha_k}.$$ 
Let us now fix a particular collection $\Delta_0$ of representations:
Let $\widetilde{\Delta_{0}}$ be the collection of the irreducible components of the adjoint
representation, so that
$${\rm Ad}=(\dim Z(G)) 1\oplus\bigoplus_{\xi\in \widetilde{\Delta_{0}}}\xi,$$
 where $\xi$
are irreducible representations and $1$ is the trivial one-dimensional representation.
In the case when the centre $Z(G)$ of the group is nontrivial, we extend the collection
$\widetilde{\Delta_{0}}$ to some collection $\Delta_{0}$ by adding to $\widetilde{\Delta_{0}}$
a family of irreducible representations such that their direct sum is nontrivial on
$Z(G)$, and such that the function 
\begin{equation*}\label{eq:rhoDef}
\rho^2(x) = \sum\nolimits_{[\xi]\in\Delta_0} \left(
d_\xi-{\rm trace} \xi(x)\right) \ge 0
\end{equation*} 
(which vanishes only in $x=e$) would define the square of some distance function on $G$ 
near the identity element. 
Such an extension is always possible, and we denote by $\Delta_{0}$ any such
extension; in the case of the trivial centre we do not have to take
an extension and we set $\Delta_{0}=\widetilde{\Delta_{0}}$.
We denote further by $\rhodiff$ the second order difference operator associated to $\rho^2(x)$,
$$\rhodiff={\mathscr F} \rho^2(x) {\mathscr F}^{-1}.$$
In the sequel, when we write ${\mathbb D}^\alpha$, we can always
assume that it is composed only of ${}_{\xi_m} {\mathbb D}_{i_m j_m}$ with
$[\xi_m]\in\Delta_0$.

Such difference operators $\D^\alpha$ enjoy a number of additional algebraic properties compared to arbitrary difference operators, for example they satisfy the Leibniz formula, and lead to the distance function $\rho(x)$ that gives rise to the Calderon-Zygmund theory in the spirit of Coifman and Weiss, see \cite{Ruzhansky-Wirth:Lp-Z} for the details.

\section{$L^p$-boundedness}

In this section we establish the main results on the boundedness of operators on $L^p(G)$. We observe that from Theorem A by Fefferman and the equivalence of classes described in Remark \ref{eq4a}, one can extend the Fefferman bounds to compact Lie groups for symbols in $\mathscr{S}_{\rho,\delta}^m(G)$ as an immediate consequence, but assuming that $\delta<\rho$ and $1-\delta\leq\rho$. In particular, this type of argument leads to the restriction $\rho>\half$. So the case of interest to us will be the one allowing $\rho\leq \half$ and $\rho=\delta$. Moreover, we will also obtain 
some $L^p$ bounds for symbols with finite regularity, which can not be deduced from the aforementioned equivalence of classes for smooth symbols. In the latter case we even allow $\delta>\rho$. \\

\subsection{Symbols of finite regularity}

In order to deduce some consequences the following lemma proved in \cite{dr:gevrey} will be of importance to us.
\begin{lem}\label{lem2} 
Let $G$ be a compact Lie group. Then we have 
$$\sum_{[\xi]\in \widehat{G}}d_{\xi}^2\jp{\xi}^{-s}<\infty$$ 
if and only if $s>\dim G$. 
\end{lem}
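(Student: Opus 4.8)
The plan is to recognise the series as the trace of a negative-order elliptic operator and to reduce its convergence to Weyl's law. By the Peter--Weyl theorem each irreducible class $[\xi]\in\Gh$ contributes exactly $\dxi^2$ matrix coefficients $\xi_{ij}$, and by \eqref{EQ:Lap-lambda} all of them are eigenfunctions of $-\lap$ with the single eigenvalue $\eigen$. Hence the spectrum of $(I-\lap)^{-s/2}$ consists of the numbers $\jp{\xi}^{-s}=(1+\eigen)^{-s/2}$, each occurring with multiplicity $\dxi^2$, and the series in question is precisely
\[\sumxi \dxi^2\jp{\xi}^{-s}=\Tr\bigl((I-\lap)^{-s/2}\bigr).\]
Thus the statement is equivalent to the assertion that the elliptic pseudo-differential operator $(I-\lap)^{-s/2}$ of order $-s$ on the compact $n$-dimensional manifold $G$ is of trace class if and only if $s>n=\dim G$.

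First I would introduce the eigenvalue counting function of $-\lap$, namely $N(t):=\sum_{\eigen\le t}\dxi^2$, the number of eigenvalues (with multiplicity) of the positive second-order elliptic operator $-\lap$ not exceeding $t$. Weyl's asymptotic law on the compact manifold $G$ then gives $N(t)\sim C_G\,t^{n/2}$ as $t\to\infty$ for some $C_G>0$; in particular $c_1 t^{n/2}\le N(t)\le c_2 t^{n/2}$ for $t$ large. The half-power is worth noting: since the eigenvalues of $-\lap$ are $\eigen$ and not $\lambda_{[\xi]}$, the correct exponent is $n/2$ rather than $n$.

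Next I would write the series as a Stieltjes integral and compare. Abel summation (integration by parts against $dN$) turns $\sumxi \dxi^2(1+\eigen)^{-s/2}$ into $\int_{0}^{\infty}(1+t)^{-s/2}\,dN(t)$, and inserting the two-sided Weyl bounds reduces its convergence to that of $\int_{1}^{\infty}t^{-s/2}\,t^{n/2-1}\,dt=\int_{1}^{\infty}t^{(n-s)/2-1}\,dt$, which is finite exactly when $s>n$. This settles both implications simultaneously.

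The only genuinely nontrivial ingredient is Weyl's law, which I would simply cite as a classical fact for the Laplacian on a compact manifold; everything else is routine summation by parts. If one prefers a self-contained, purely representation-theoretic argument (which is natural on a Lie group), the main obstacle instead becomes establishing the growth rates directly: parametrising $\Gh$ by dominant highest weights in the rank-$r$ weight lattice, the Weyl dimension formula gives $\dxi\asymp|\lambda|^{(n-r)/2}$ (the exponent being the number $|\Phi^+|=(n-r)/2$ of positive roots), while the Freudenthal--Harish-Chandra formula for the Casimir gives $\eigen\asymp|\lambda|^2$. The series then becomes a lattice sum $\sum_\lambda |\lambda|^{(n-r)-s}$ over a full-dimensional cone in $\mathbb{R}^{r}$, and integral comparison yields convergence iff $(n-r)-s+(r-1)<-1$, that is iff $s>n$, recovering the same threshold $s>\dim G$.
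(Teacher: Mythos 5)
Your proof is correct and is essentially the paper's own: the paper gives no argument but cites \cite{dr:gevrey}, and the proof there is exactly your identification of the series with $\Tr\left((I-\mathcal{L}_G)^{-s/2}\right)$ --- the eigenvalues $\langle\xi\rangle^{-s}$ having multiplicity $d_{\xi}^{2}$ by the Peter--Weyl theorem and \eqref{EQ:Lap-lambda} --- followed by Weyl's eigenvalue-counting asymptotics and summation by parts. The only caveat concerns your optional representation-theoretic aside: the Weyl dimension formula gives $d_{\xi}\asymp|\lambda|^{(n-r)/2}$ only for highest weights staying away from the walls of the dominant chamber (near a wall $d_{\xi}$ degenerates), so there one should combine the uniform upper bound $d_{\xi}\lesssim|\lambda+\rho|^{(n-r)/2}$, valid everywhere, with the lower bound on a full-dimensional subcone of regular weights; this does not affect your main argument.
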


In the next lemma we deduce a sufficient condition for the $L^{\infty}(G)$-boundedness.
\begin{lem}\label{l1a} Let $G$ be a compact Lie group. Let 
 $\sigma$ be the symbol of a linear continuous operator $A:C^{\infty}(G)\rightarrow\mathcal{D}'(G)$ such that
\beq \esssup_{x\in G}\|\mathcal F_G^{-1}\sigma(x,\cdot)\|_{L^1(G)}<\infty .\label{ji1a}\eq
 Then $A$ extends to a bounded operator from $L^{\infty}(G)$ to $L^{\infty}(G)$,
 and
 \[\|Af\|_{L^{\infty}}\leq \esssup_{x\in G}\|\mathcal F_G^{-1}\sigma(x,\cdot)\|_{L^1(G)}\|f\|_{L^{\infty}},\,\, \forall f\in L^{\infty}(G).\]
\end{lem}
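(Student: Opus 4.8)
The plan is to write $Af$ using the global quantization \eqref{EQ:A-quant} and recognise the inner sum as a convolution against the inverse Fourier transform of the symbol in the $\xi$-variable. Fix $x\in G$. Starting from
\[
Af(x)=\sum_{[\xi]\in\widehat{G}}d_{\xi}\Tr(\xi(x)\sigma(x,\xi)\widehat{f}(\xi)),
\]
I would freeze the $x$ inside $\sigma(x,\xi)$ and treat $\sigma(x,\cdot)$ as a fixed sequence of matrices indexed by $[\xi]$. The key observation is that the operator with (now $x$-independent) symbol $\sigma(x,\cdot)$ acts on $f$ as convolution with the kernel $k_x:=\mathcal F_G^{-1}\sigma(x,\cdot)$. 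Concretely, writing $\widehat{f}(\xi)=\int_G f(y)\xi(y)^*\,dy$ from \eqref{EQ:FG} and substituting into the trace, one uses $\Tr(\xi(x)\sigma(x,\xi)\xi(y)^*)$ together with the representation property $\xi(x)\xi(y)^*=\xi(xy^{-1})$ after cyclicity of the trace, to rewrite
\[
Af(x)=\int_G \p{\sum_{[\xi]\in\widehat{G}}d_{\xi}\Tr\p{\xi(xy^{-1})\sigma(x,\xi)}}f(y)\,dy=\int_G k_x(xy^{-1})f(y)\,dy,
\]
where $k_x(z)=\mathcal F_G^{-1}\sigma(x,\cdot)(z)$ by the inversion formula \eqref{EQ:FGi}. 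Thus $Af(x)$ is the convolution $(f\ast k_x)(x)$ evaluated at $x$, with the kernel depending on the base point $x$.

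From here the $L^\infty$ bound is immediate by Young's-type estimation: for each fixed $x$,
\[
|Af(x)|=\abs{\int_G k_x(xy^{-1})f(y)\,dy}\leq \|f\|_{L^\infty}\int_G |k_x(xy^{-1})|\,dy=\|f\|_{L^\infty}\,\|k_x\|_{L^1(G)},
\]
using the translation-invariance of the Haar measure in the last equality. Taking the essential supremum over $x\in G$ and invoking the hypothesis \eqref{ji1a} gives
\[
\|Af\|_{L^\infty}\leq \p{\esssup_{x\in G}\|\mathcal F_G^{-1}\sigma(x,\cdot)\|_{L^1(G)}}\|f\|_{L^\infty},
\]
which is exactly the claimed estimate, and in particular shows $A$ extends boundedly to $L^\infty(G)$.

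I expect the main technical point to be the justification of the convolution identity, specifically the interchange of the $\int_G dy$ integral with the sum over $[\xi]\in\widehat{G}$, and the correct handling of the noncommutative trace so that $k_x$ is genuinely $\mathcal F_G^{-1}\sigma(x,\cdot)$. Since $A$ is only assumed continuous from $C^\infty(G)$ to $\mathcal D'(G)$, I would first establish the identity for $f\in C^\infty(G)$, where \eqref{EQ:A-quant} converges absolutely and the manipulations are rigorous, obtain the bound on this dense subspace, and then extend to all of $L^\infty(G)$ by density together with the uniform bound. The cyclicity step $\Tr(\xi(x)\sigma(x,\xi)\xi(y)^*)=\Tr(\xi(xy^{-1})\sigma(x,\xi))$ and the identification of the resulting series with $\mathcal F_G^{-1}\sigma(x,\cdot)$ evaluated at $xy^{-1}$ are the only places where the noncommutative structure must be tracked carefully; everything else is a direct estimate.
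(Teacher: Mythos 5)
Your approach is the same as the paper's: substitute $\widehat f(\xi)=\int_G f(y)\xi(y)^*\,dy$ into the quantization formula, use cyclicity of the trace to exhibit $Af(x)$ as a convolution of $f$ against the kernel $k_x=\mathcal F_G^{-1}\sigma(x,\cdot)$, and conclude by the triangle inequality and invariance of Haar measure. However, the one step you yourself flagged as needing care is precisely where your computation goes wrong. In $\Tr(\xi(x)\sigma(x,\xi)\xi(y)^*)$ the factors $\xi(x)$ and $\xi(y)^*$ are separated by $\sigma(x,\xi)$, so no cyclic permutation ever makes them adjacent in the order $\xi(x)\xi(y)^*$; cyclicity only gives
\[
\Tr(\xi(x)\sigma(x,\xi)\xi(y)^*)=\Tr(\xi(y)^*\xi(x)\sigma(x,\xi))=\Tr(\xi(y^{-1}x)\sigma(x,\xi)).
\]
Hence your claimed identity $Af(x)=\int_G k_x(xy^{-1})f(y)\,dy$ is false on a noncommutative group; the correct one, which is what the paper derives, is
\[
Af(x)=\int_G k_x(y^{-1}x)f(y)\,dy=(f\ast k_x)(x).
\]
To see that your version genuinely fails, test it with $\sigma(x,\xi)$ equal to the matrix unit $E_{11}$: then $\Tr(\xi(x)E_{11}\xi(y)^*)=\sum_i \xi(x)_{i1}\overline{\xi(y)_{i1}}$, which equals $\Tr(\xi(y^{-1}x)E_{11})$ but not $\Tr(\xi(xy^{-1})E_{11})=\sum_k \xi(x)_{1k}\overline{\xi(y)_{1k}}$.

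Fortunately the slip is harmless for the final estimate: Haar measure on a compact group is bi-invariant and inversion-invariant, so $\int_G |k_x(y^{-1}x)|\,dy=\|k_x\|_{L^1(G)}$ and the bound $|Af(x)|\le \|k_x\|_{L^1(G)}\|f\|_{L^\infty}$ follows exactly as you wrote once the kernel identity is corrected; from there the lemma is immediate, as in the paper. A second, minor caveat: your plan to pass from $C^\infty(G)$ to $L^\infty(G)$ ``by density'' does not work literally, since $C^\infty(G)$ is not dense in $L^\infty(G)$ in the uniform norm (its closure is $C(G)$). The cleaner route, implicit in the paper, is that the convolution formula $Af(x)=(f\ast k_x)(x)$ makes sense directly for every $f\in L^\infty(G)$ because $k_x\in L^1(G)$, and this formula itself provides the bounded extension.
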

\begin{proof} We first observe that
\begin{align*} Af(x)=&\sum\limits_{[\xi]\in \widehat{G}}d_{\xi}\Tr(\xi(x)\sigma(x,\xi)\widehat{f}(\xi)) \\
=&\int_G\sum\limits_{[\xi]\in \widehat{G}}d_{\xi}\Tr(\xi(x)\sigma(x,\xi)\xi(y)^*)f(y)dy \\
=&\int_G\sum\limits_{[\xi]\in \widehat{G}}d_{\xi}\Tr(\xi(y)^*\xi(x)\sigma(x,\xi))f(y)dy \\
=&\int_G\sum\limits_{[\xi]\in \widehat{G}}d_{\xi}\Tr(\xi(y^{-1}x)\sigma(x,\xi))f(y)dy \\
=&\int_G\mathcal F_G^{-1}\sigma(x,\cdot)(y^{-1}x)f(y)dy \\
=&(f\ast\mathcal F_G^{-1}\sigma(x,\cdot))(x).
\end{align*}
Hence 
\[|Af(x)|\leq \|\mathcal F_G^{-1}\sigma(x,\cdot)\|_{L^1(G)}\|f\|_{L^{\infty}(G)}\leq C\|f\|_{L^{\infty}(G)},\]
for almost every $x\in G$.\\

Therefore
\[\|Af\|_{L^{\infty}(G)}\leq \esssup_{x\in G}\|\mathcal F_G^{-1}\sigma(x,\cdot)\|_{L^1(G)}\|f\|_{L^{\infty}(G)},\]
completing the proof.
\end{proof}

The next statement gives a sufficient condition for an operator to be Hilbert-Schmidt on $L^2(G)$ and bounded on $L^p(G)$ for $2\leq p\leq\infty.$ It also shows an example of when the condition \eqref{ji1a} is satisfied, see also Corollary \ref{the1a}. It will also imply further results on the $L^p$-boundedness. This is a particular feature of the compact situation. 

\begin{prop}\label{cjma1} 
Let $G$ be a compact Lie group. Let $\sigma$ be the symbol of a linear continuous operator $A:C^{\infty}(G)\rightarrow\mathcal{D}'(G)$ such that
\beq\operatorname*{ess\,sup}_{x\in G}\sum\limits_{[\xi]\in \widehat{G}}d_{\xi}\|\sigma(x,\xi)\|_{\HS}^2<\infty .\label{hses}\eq
Then \eqref{ji1a} holds and $A$ extends to a Hilbert-Schmidt bounded operator from $L^{2}(G)$ to $L^{2}(G)$, and to a bounded
 operator from $L^{p}(G)$ to $L^{p}(G)$ for all $2\leq p\leq\infty$.\\
 
 The Hilbert-Schmidt norm of $A$ is given by
\beq\|A\|_{\HS}^2=\int_G\sum\limits_{[\xi]\in \widehat{G}}d_{\xi}\|\sigma(x,\xi)\|_{\HS}^2dx.\label{hs1e}\eq
\end{prop}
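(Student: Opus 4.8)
The plan is to prove the three claims in sequence: first that the Hilbert-Schmidt condition \eqref{hses} implies the $L^1$-condition \eqref{ji1a}, then that $A$ is Hilbert-Schmidt on $L^2(G)$ with the stated norm, and finally to interpolate for $2\leq p\leq\infty$. The natural starting point is the integral-kernel representation already computed in the proof of Lemma \ref{l1a}, namely that $A$ acts as $Af(x)=(f\ast\mathcal F_G^{-1}\sigma(x,\cdot))(x)$, so the kernel of $A$ is $K(x,y)=\mathcal F_G^{-1}\sigma(x,\cdot)(y^{-1}x)$.

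For the implication \eqref{hses}$\Rightarrow$\eqref{ji1a}, I would control the $L^1$-norm of $\mathcal F_G^{-1}\sigma(x,\cdot)$ by its $L^2$-norm, which is legitimate since $G$ is compact with normalised Haar measure, so $\|g\|_{L^1(G)}\leq\|g\|_{L^2(G)}$. By the Parseval identity \eqref{EQ:Parseval} applied to the inverse transform, $\|\mathcal F_G^{-1}\sigma(x,\cdot)\|_{L^2(G)}^2=\sum_{[\xi]\in\widehat G}d_\xi\|\sigma(x,\xi)\|_{\HS}^2$. Taking the essential supremum over $x$ and invoking \eqref{hses} shows $\esssup_x\|\mathcal F_G^{-1}\sigma(x,\cdot)\|_{L^1(G)}<\infty$, which is exactly \eqref{ji1a}; Lemma \ref{l1a} then gives the $L^\infty\to L^\infty$ boundedness for free.

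For the Hilbert-Schmidt statement, the idea is to recognise $A$ as an integral operator with kernel $K(x,y)$ and compute $\|A\|_{\HS}^2=\int_G\int_G|K(x,y)|^2\,dy\,dx$. Performing the change of variables $y\mapsto y^{-1}x$ in the inner integral (which preserves Haar measure) turns $\int_G|\mathcal F_G^{-1}\sigma(x,\cdot)(y^{-1}x)|^2\,dy$ into $\|\mathcal F_G^{-1}\sigma(x,\cdot)\|_{L^2(G)}^2$, and applying Parseval again produces $\sum_{[\xi]}d_\xi\|\sigma(x,\xi)\|_{\HS}^2$. Integrating in $x$ gives precisely \eqref{hs1e}, and finiteness follows from \eqref{hses}. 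Finally, a Hilbert-Schmidt operator is in particular bounded on $L^2(G)$, so $A:L^2\to L^2$ boundedly; combining this with the $L^\infty\to L^\infty$ bound and Riesz-Thorin interpolation yields boundedness on $L^p(G)$ for all $2\leq p\leq\infty$, completing the proof.

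The main obstacle I anticipate is mostly technical bookkeeping rather than a deep difficulty: one must justify that the kernel $K(x,y)$ genuinely lies in $L^2(G\times G)$ before identifying $A$ as a Hilbert-Schmidt operator, and that the measure-preserving substitution and the application of Parseval commute cleanly with the essential supremum. The only genuinely substantive point is ensuring that the formal kernel computation from Lemma \ref{l1a}, originally derived in the distributional sense, is valid pointwise almost everywhere once \eqref{hses} guarantees square-integrability; this is exactly what lets us pass from the abstract quantization \eqref{EQ:A-quant} to a bona fide integral operator.
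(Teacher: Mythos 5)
Your proposal is correct and follows essentially the same route as the paper's own proof: Cauchy--Schwarz (the $L^1$--$L^2$ bound on the compact group) plus Parseval to get \eqref{ji1a} and invoke Lemma \ref{l1a}, then the kernel representation $K(x,y)=\mathcal F_G^{-1}\sigma(x,\cdot)(y^{-1}x)$ with the measure-preserving substitution to compute the Hilbert--Schmidt norm \eqref{hs1e}, and finally interpolation between $p=2$ and $p=\infty$. No gaps worth flagging.
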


\begin{proof} We observe that the Cauchy-Schwarz inequality and the Parseval identity \eqref{EQ:Parseval} imply
\begin{align} \int_G|\mathcal{F}_G^{-1}\sigma(x,\cdot)(y)|dy \leq& \left(\int_G|\mathcal{F}_G^{-1}\sigma(x,\cdot)(y)|^2dy\right)^\half \nonumber\\
=&\left(\sum\limits_{[\xi]\in \widehat{G}}d_{\xi}\|\sigma(x,\xi)\|_{\HS}^2\right)^\half \nonumber\\
\leq& \left(\operatorname*{ess\,sup}_{x\in G}\sum\limits_{[\xi]\in \widehat{G}}d_{\xi}\|\sigma(x,\xi)\|_{\HS}^2\right)^\half <\infty.\label{1eqpr}
\end{align}
Hence by Lemma \ref{l1a} the operator $A$ is bounded from $L^{\infty}(G)$ to $L^{\infty}(G)$.\\

%Now, from \eqref{1eqpr} we have  
%\[\sum\limits_{[\xi]\in \widehat{G}}d_{\xi}\|\sigma(x,\xi)\|_{HS}^2\leq C^2<\infty,\]
%for almost every $x\in G$.\\

%Integrating over $G$ we obtain
%\beq\int_G\sum\limits_{[\xi]\in \widehat{G}}d_{\xi}\|\sigma(x,\xi)\|_{HS}^2dx\leq C^2<\infty.\label{ed61}\eq
Now, from the proof of Lemma \ref{l1a} we see that the kernel $K_A$ of $A$ is given by 
\[K_A(x,y)=\mathcal{F}_G^{-1}\sigma(x,\cdot)(y^{-1}x).\]
We observe that, by integrating \eqref{1eqpr} over $G$ we get
\begin{align*}\int_G\int_G|K(x,y)|^2dydx=&\int_G\int_G|\mathcal{F}_G^{-1}\sigma(x,\cdot)(y^{-1}x)|^2dydx\\
=&\int_G\int_G|\mathcal{F}_G^{-1}\sigma(x,\cdot)(z)|^2dzdx\\
=&\int_G\sum\limits_{[\xi]\in \widehat{G}}d_{\xi}\|\sigma(x,\xi)\|_{\HS}^2dx<\infty.
\end{align*}
Hence $A$ is a Hilbert-Schmidt operator on $L^2(G)$, \eqref{hs1e} holds and in particular $A$ is bounded on $L^2(G)$. By interpolating between $p=2$ and $p=+\infty$, we conclude the proof.
\end{proof}
\begin{rem} (i) The $\operatorname*{ess\,sup}_{x\in G}$ in the condition \eqref{hses} can be removed if the symbol
$\sigma(x,\xi)$ is continuous on $G$ for all $[\xi]\in \widehat{G}$.\\
(ii) It is not hard to see that an analogous condition to \eqref{hses} does not hold in the case of non-compact groups. 
 Indeed, for $G=\arn$ consider a symbol $\sigma$ of the form $$\sigma(x,\xi)=\beta(x)\alpha(\xi)$$ with $\beta\in L^{\infty}(\arn)\setminus L^{2}(\arn)$, 
$\alpha\in L^{2}(\arn)\setminus \{0\}$. Then, $$\operatorname*{ess\,sup}_{x\in\arn}\int_{\arn}|\sigma(x,\xi)|^2d\xi<\infty,$$ but $\sigma\notin L^2(\arn\times\arn)$
 and $\sigma$ does not beget a Hilbert-Schmidt operator on $L^{2}(\arn)$. 
\end{rem}
As a consequence we obtain the next corollary with a condition in terms of the size of the symbol measured with the operator norm.
\begin{cor}\label{the1a} Let $G$ be a compact Lie group and let $m$ be a real number such that $m>\frac{\dim\, G}{2}$. Let 
 $\sigma$ be the symbol of a linear continuous operator $A:C^{\infty}(G)\rightarrow\mathcal{D}'(G)$ such that
\beq\|\sigma_{A}(x,\xi)\|_{op}\leq C\langle\xi\rangle^{-m},\,\, (x,\xi)\in G\times\widehat{G}.\label{osym}\eq
Then $A$ extends to a Hilbert-Schmidt bounded operator from $L^{2}(G)$ to $L^{2}(G)$, and to a bounded
 operator from $L^{p}(G)$ to $L^{p}(G)$ for all $2\leq p\leq\infty$.
\end{cor}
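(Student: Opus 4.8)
The plan is to derive this corollary directly from Proposition \ref{cjma1}: I would verify that the pointwise operator-norm bound \eqref{osym} forces the Hilbert--Schmidt summability condition \eqref{hses}, after which Proposition \ref{cjma1} immediately yields both the Hilbert--Schmidt property on $L^2(G)$ and the boundedness on $L^p(G)$ for all $2\le p\le\infty$, which is exactly the asserted conclusion. So the entire task reduces to checking the hypothesis of that proposition.

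The one substantive step is to pass from the operator norm to the Hilbert--Schmidt norm at each $[\xi]$. For a matrix $M\in\cdxi$, writing $\|M\|_{\HS}^2=\Tr(M^*M)\le d_\xi\,\|M^*M\|_{op}=d_\xi\,\|M\|_{op}^2$ (equivalently, the squared $\HS$-norm is the sum of the $d_\xi$ squared singular values, each bounded by $\|M\|_{op}^2$) gives the comparison $\|M\|_{\HS}^2\le d_\xi\,\|M\|_{op}^2$. Applying this with $M=\sigma(x,\xi)$ and inserting \eqref{osym} yields $\|\sigma(x,\xi)\|_{\HS}^2\le C^2 d_\xi\jp{\xi}^{-2m}$ uniformly in $x$, so that
\[
\sumxi d_\xi\|\sigma(x,\xi)\|_{\HS}^2\le C^2\sumxi d_\xi^2\jp{\xi}^{-2m},
\]
with the right-hand side independent of $x$.

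It then remains to see that this last series converges. By Lemma \ref{lem2} the sum $\sumxi d_\xi^2\jp{\xi}^{-s}$ is finite precisely when $s>\dim G$; taking $s=2m$, this holds exactly under the standing hypothesis $m>\frac{\dim G}{2}$. Consequently the essential supremum in \eqref{hses} is bounded by $C^2\sumxi d_\xi^2\jp{\xi}^{-2m}<\infty$, so \eqref{hses} is satisfied and Proposition \ref{cjma1} applies to finish the proof.

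I do not expect any genuine obstacle here: the argument is essentially routine. The only point worth flagging is the appearance of the factor $d_\xi$ in the matrix-norm comparison, which turns the relevant series into $\sumxi d_\xi^2\jp{\xi}^{-2m}$ rather than $\sumxi d_\xi\jp{\xi}^{-2m}$. It is precisely this extra power of $d_\xi$, combined with the convergence threshold of Lemma \ref{lem2}, that produces the requirement $m>\frac{\dim G}{2}$ and explains why decay of order strictly greater than half the dimension is needed.
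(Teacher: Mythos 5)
Your proof is correct and follows essentially the same route as the paper: both reduce to Proposition \ref{cjma1} via the comparison $\|\sigma(x,\xi)\|_{\HS}^2\leq d_\xi\|\sigma(x,\xi)\|_{op}^2$ and then invoke Lemma \ref{lem2} with $s=2m>\dim G$. The only cosmetic difference is that the paper obtains this matrix inequality by applying \eqref{ophsi} to the factorisation $\sigma(x,\xi)=\sigma(x,\xi)I_{d_\xi}$ (using $\|I_{d_\xi}\|_{\HS}=\sqrt{d_\xi}$), whereas you prove it directly from the trace/singular-value characterisation of the Hilbert--Schmidt norm.
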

\begin{proof} By applying \eqref{ophsi} to the decomposition $\sigma(x,\xi)=\sigma(x,\xi)I_{d_{\xi}}$,
 where $I_{d_{\xi}}$ is the identity matrix in $\ce^{d_{\xi}\times d_{\xi}}$, Lemma \ref{lem2} and the assumption on the symbol, we obtain
 for every $x\in G$:
\begin{align*} \sum\limits_{[\xi]\in \widehat{G}}d_{\xi}\|\sigma(x,\xi)\|_{\HS}^2\leq &\sum\limits_{[\xi]\in \widehat{G}}d_{\xi}^2\|\sigma(x,\xi)\|_{op}^2\\
\leq&C\sum\limits_{[\xi]\in \widehat{G}}d_{\xi}^2\langle\xi\rangle^{-2m}<\infty .
\end{align*}
Then, $\sup\limits_{x\in G}\sum\limits_{[\xi]\in \widehat{G}}d_{\xi}\|\sigma(x,\xi)\|_{\HS}^2<\infty,$ 
and an application of Proposition \ref{cjma1} concludes the proof.\end{proof}
\begin{rem}\label{relp1} (i) The condition \eqref{osym} on the order of the symbol, $m>\frac{\dim M}{2}$, is well known (cf. \cite{shubin:r}) in the context of compact manifolds $M$ as a sharp order to ensure that  a pseudo-differential operator is Hilbert-Schmidt. Here, in contrast, we do not assume any smoothness on the symbol nor do we require it to satisfy inequalities for the derivatives.\\

\noindent (ii) We observe that in the case of the torus $\Tn$, if the symbol $\sigma$ only depends on $\xi$ and for $k>\frac{n}{2}$, $k\in\ene$,   
satisfies the inequalities
\beq|\Delta_{\xi}^{\alpha}\sigma(\xi)|\leq C\langle\xi\rangle^{-|\alpha|}, \mbox{ for all }\xi\in\Zn,\label{inesyb1}\eq
and all multi-indices $\alpha$ such that $|\alpha|\leq k$, then the operator $A$ is bounded on $L^p(\Tn)$ for all $1<p<\infty$. 
 Here $$\Delta_{\xi}^{\alpha}=\Delta_{\xi_1}^{\alpha_1}\cdots \Delta_{\xi_n}^{\alpha_n} $$ are the usual partial difference
 operators on the lattice $\Zn$. This classical result has been extended to non-invariant operators on the torus, replacing \eqref{inesyb1} by
 \beq|\partial_x^{\beta}\Delta_{\xi}^{\alpha}\sigma(x,\xi)|\leq C\langle\xi\rangle^{-|\alpha|}, \mbox{ for all }\xi\in\Zn,\label{inesyb2},\eq
and all multi-indices $\alpha, \beta$ such that $|\alpha|\leq k, |\beta|\leq k .$ 

%The condition $k>\frac{n}{2}$ is sharp. Indeed, if $n=1$, and for $k=0$, the condition \eqref{inesyb1} is reduced to **. 
%By taking on $\Pi^1$, an analogous of \eqref{hlhw}, a symbol $\sigma$ defined by  $\sigma(\xi)=\frac{}{}$..**  

(iii) Moreover, recently in \cite[Theorem 2.1]{Ruzhansky-Wirth:Lp-Z}  a version of the condition \eqref{inesyb1} has been obtained for compact Lie groups. Let $\kappa$ be the smallest even integer larger than $\frac{\dim G}{2}$. Let $A:C^{\infty}(G)\rightarrow\mathcal{D}'(G)$ be a left-invariant linear continuous operator. Among other things it was shown in \cite{Ruzhansky-Wirth:Lp-Z} that if its matrix symbol $\sigma$ satisfies
\beq\|\D_{\xi}^{\alpha}\sigma(\xi)\|_{op}\leq C_{\alpha}\langle\xi\rangle^{-|\alpha|}\eq
for all multi-indices $\alpha$ with $|\alpha|\leq \kappa$ and for all $[\xi]\in \widehat{G}$, then the operator $A$ is of weak  type $(1,1)$ and $L^p$-bounded for all $1<p<\infty.$ 

(iv) Further, a condition of type $(\rho,0)$ has been also obtained in \cite[Corollary 5.1]{Ruzhansky-Wirth:Lp-Z}. Let $\rho\in [0,1]$ and let $\kappa$ be as above. If $A:C^{\infty}(G)\rightarrow\mathcal{D}'(G)$ is left-invariant and its matrix symbol $\sigma$ satisfies
\beq\|\D_{\xi}^{\alpha}\sigma(\xi)\|_{op}\leq C_{\alpha}\langle\xi\rangle^{-\rho|\alpha|}\eq
for all multi-indices $\alpha$ with $|\alpha|\leq \kappa$ and for all $[\xi]\in \widehat{G}$, then the operator $A$ is 
 bounded from the Sobolev space $W^{p,r}(G)$ to $L^p(G)$ for $1<p<\infty$ and
\[r\geq\kappa (1-\rho)\left|\frac{1}{p}-\half\right|.
\]
Here the Sobolev space $W^{p,r}(G)$ consists of all the distributions $f$ such that $(I-\mathcal{L}_G)^{\frac r2}f\in L^p(G)$.\\
(v) Lemma \ref{l1a}, Proposition \ref{cjma1} and Corollary \ref{the1a} admit suitable extensions to general compact topological groups and nilpotent groups, see e.g. \cite{FR:graded}.
\end{rem}

As a consequence of \cite[Corollary 5.1]{Ruzhansky-Wirth:Lp-Z} (or Remark \ref{relp1} (iv)), we obtain: 
\begin{thm}\label{lepr} Let $\rho\in [0,1]$ and let $\kappa$ be the smallest even integer larger than $\frac{\dim G}{2}$. If $A:C^{\infty}(G)\rightarrow\mathcal{D}'(G)$ is left-invariant and its matrix symbol $\sigma$ satisfies
\beq\label{leq1s}\|\D_{\xi}^{\alpha}\sigma(\xi)\|_{op}\leq C_{\alpha}\langle\xi\rangle^{-r-\rho|\alpha|}\eq
with $$r=\kappa(1-\rho)\left|\frac{1}{p}-\half\right|,$$ for all multi-indices $\alpha$ with $|\alpha|\leq \kappa$ and for all $[\xi]\in \widehat{G}$, $1<p<\infty$, then the operator $A$ is bounded from $L^{p}(G)$ to $L^p(G)$.
\end{thm}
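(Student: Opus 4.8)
The plan is to reduce the statement to the already-established multiplier theorem recorded in Remark \ref{relp1} (iv) (equivalently \cite[Corollary 5.1]{Ruzhansky-Wirth:Lp-Z}) by factoring out a Sobolev weight of order $r$. Since $A$ is left-invariant, so is the operator
$$B:=A\,(I-\mathcal{L}_G)^{r/2},$$
because $(I-\mathcal{L}_G)^{r/2}$ is a function of the bi-invariant Laplacian. By \eqref{EQ:Lap-lambda} and the definition $\langle\xi\rangle=(1+\lambda^2_{[\xi]})^{1/2}$, the invariant operator $(I-\mathcal{L}_G)^{r/2}$ has the scalar, hence central, matrix symbol $\langle\xi\rangle^{r}I_{d_\xi}$, so that the symbol of $B$ is simply $\sigma_B(\xi)=\langle\xi\rangle^{r}\sigma(\xi)$, regardless of the order of composition.

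First I would check that $\sigma_B$ satisfies the hypothesis of Remark \ref{relp1} (iv), namely $\|\D_\xi^\alpha\sigma_B(\xi)\|_{op}\leq C_\alpha\langle\xi\rangle^{-\rho|\alpha|}$ for $|\alpha|\leq\kappa$. Applying the Leibniz formula for the difference operators $\D_\xi$ (valid for the admissible family $\Delta_0$ described in Section \ref{SEC:Prelim}) to the product $\langle\xi\rangle^{r}\sigma(\xi)$ produces a finite sum of terms of the form $(\D_\xi^{\alpha_1}\langle\xi\rangle^{r})(\D_\xi^{\alpha_2}\sigma(\xi))$ with $|\alpha_1|+|\alpha_2|=|\alpha|$. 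Using the estimate $\|\D_\xi^{\alpha_1}\langle\xi\rangle^{r}\|_{op}\leq C\langle\xi\rangle^{r-|\alpha_1|}$ for the weight together with the hypothesis $\|\D_\xi^{\alpha_2}\sigma(\xi)\|_{op}\leq C\langle\xi\rangle^{-r-\rho|\alpha_2|}$, each such term is bounded by $\langle\xi\rangle^{r-|\alpha_1|}\langle\xi\rangle^{-r-\rho|\alpha_2|}=\langle\xi\rangle^{-|\alpha_1|-\rho|\alpha_2|}$. Since $\rho\leq 1$ forces $-|\alpha_1|-\rho|\alpha_2|\leq -\rho(|\alpha_1|+|\alpha_2|)=-\rho|\alpha|$, the desired bound on $\sigma_B$ follows (only using the hypothesis on $\sigma$ at orders $\leq\kappa$).

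With this estimate in hand, Remark \ref{relp1} (iv) applied to the left-invariant operator $B$ with the chosen value $r=\kappa(1-\rho)\left|\frac1p-\half\right|$ yields that $B$ is bounded from $W^{p,r}(G)$ to $L^p(G)$. Finally I would invoke the definition of the Sobolev space recorded in Remark \ref{relp1} (iv): $(I-\mathcal{L}_G)^{-r/2}$ is an isomorphism of $L^p(G)$ onto $W^{p,r}(G)$. Since $A=B\,(I-\mathcal{L}_G)^{-r/2}$, the operator $A$ factors as $L^p(G)\to W^{p,r}(G)\to L^p(G)$ and is therefore bounded on $L^p(G)$.

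The only genuinely technical point is the weight estimate $\|\D_\xi^{\alpha_1}\langle\xi\rangle^{r}\|_{op}\leq C\langle\xi\rangle^{r-|\alpha_1|}$, i.e. that $\langle\xi\rangle^{r}$ behaves as a symbol of order $r$ of type $(1,0)$ under the difference operators $\D_\xi$; this is a functional-calculus fact that can be read off from \cite{Ruzhansky-Wirth:functional-calculus} (and verified directly, since $\langle\xi\rangle^{r}$ depends only on $\lambda^2_{[\xi]}$). Everything else is a routine application of the Leibniz rule together with the multiplicativity of symbols of invariant operators.
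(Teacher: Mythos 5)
Your proposal is correct and follows essentially the same route as the paper: the paper likewise sets $\Gamma(\xi)=\sigma(\xi)\langle\xi\rangle^{r}$, invokes \cite[Corollary 5.1]{Ruzhansky-Wirth:Lp-Z} (Remark \ref{relp1} (iv)) to get boundedness $W^{p,r}(G)\to L^p(G)$, and concludes via the fact that $(I-\mathcal{L}_G)^{r/2}$ is an isomorphism between $W^{p,r}(G)$ and $L^p(G)$. Your explicit Leibniz-rule verification of the estimate for $\Gamma$ (which the paper merely asserts) is sound; the only small imprecision is that for these difference operators the Leibniz expansion also produces cross terms with $|\alpha_1|+|\alpha_2|>|\alpha|$, but these obey the same bound since $\rho\leq 1$, so the argument is unaffected.
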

\begin{proof} We observe that the symbol
\[\Gamma(\xi)=\sigma(\xi)\langle\xi\rangle^{r}\]
satisfies
\[\|\D_{\xi}^{\alpha}\Gamma(\xi)\|_{op}\leq C_{\alpha}\langle\xi\rangle^{-\rho|\alpha|},\]
  for all multi-indices $\alpha$ with $|\alpha|\leq \kappa$ and for all $[\xi]\in \widehat{G}$. Then, the left-invariant operator ${\rm Op}(\Gamma)$
 corresponding to $\Gamma$ is bounded from $W^{p,r}(G)$ into $L^p(G)$ with $r=\kappa(1-\rho)\left|\frac{1}{p}-\half\right|$. But 
$${\rm Op}(\Gamma)=A(I-\lap)^{\frac{r}{2}}$$ and $(I-\lap)^{\frac{r}{2}}$ is an isomorphism between $W^{p,r}(G)$ and $L^p(G)$. 
Therefore $A$ is bounded from $L^{p}(G)$ into $L^p(G)$.
\end{proof}
We note that if the condition \eqref{leq1s}  holds for $\rho=1$, then $r=0$, and $A$ is bounded on $L^p(G)$ for every $1<p<\infty$. Hence, Theorem \ref{lepr} absorbs the condition \eqref{leq1s}. We now derive a $(\rho,\delta)$-type condition following the main idea in the proof of Theorem 5.2 in \cite{Ruzhansky-Wirth:Lp-Z} and using Theorem \ref{lepr}.

\begin{thm} \label{thpr} 
Let $0\leq\delta,\rho\leq 1$ and $\dim G=n$. Denote by $\kappa$ the smallest even integer larger than $\frac{n}{2}$.  Let $1<p<\infty$ and let $\ell>\frac{n}{p}$ be an integer. If $A:C^{\infty}(G)\rightarrow\mathcal{D}'(G)$ is a linear continuous operator such that its matrix symbol $\sigma$ satisfies
\beq\label{aieq1}\|\partial_x^{\beta}\D_{\xi}^{\alpha}\sigma(x,\xi)\|_{op}\leq C_{\alpha, \beta}\langle\xi\rangle^{-m_0-\rho|\alpha|+\delta|\beta|}\eq
for all $x$, with  
$$m_0\geq\kappa(1-\rho)\left|\frac{1}{p}-\half\right|+\delta([\frac{n}{p}]+1),$$ 
for all multi-indices $\alpha, \beta$ with $|\alpha|\leq \kappa$, $|\beta|\leq\ell$ and for all $[\xi]\in \widehat{G}$, then the operator $A$ is bounded from $L^{p}(G)$ to $L^p(G)$. 
\end{thm}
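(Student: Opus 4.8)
My plan is to reduce the $x$-dependent bound to the left-invariant statement of Theorem \ref{lepr} by freezing the spatial variable and then recovering the diagonal information through a Sobolev embedding, following the scheme of Theorem 5.2 in \cite{Ruzhansky-Wirth:Lp-Z}. For each fixed $y\in G$ I would introduce the left-invariant operator $A_y$ with the now $x$-independent symbol $\sigma(y,\xi)$,
$$A_yf(x)=\sum_{[\xi]\in\widehat G}d_\xi\Tr\p{\xi(x)\sigma(y,\xi)\widehat f(\xi)},$$
so that the original operator is its restriction to the diagonal, $Af(x)=A_xf(x)$. Working first with $f\in C^\infty(G)$, which is dense in $L^p(G)$, the rapid decay of $\widehat f(\xi)$ against the polynomial growth of the symbol derivatives ensures that this series and all its $y$-derivatives converge absolutely, so that $\partial_y^\beta A_yf$ is again left-invariant with symbol $\partial_y^\beta\sigma(y,\xi)$.

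The key device is to pass from the diagonal value $A_xf(x)$ to a quantity integrated in the frozen variable. I would set $\ell_0=[\frac np]+1$, the smallest integer exceeding $\frac np$; since $\ell>\frac np$ is an integer we have $\ell_0\le\ell$, so the hypothesised bounds hold for all $|\beta|\le\ell_0$. For each fixed $x$ the map $y\mapsto A_yf(x)$ is smooth, so the Sobolev embedding $W^{p,\ell_0}(G)\hookrightarrow L^\infty(G)$, valid because $\ell_0>\frac np$, yields
$$|Af(x)|=|A_xf(x)|\le\sup_{y\in G}|A_yf(x)|\le C\n{A_{\,\cdot}\,f(x)}_{W^{p,\ell_0}(G)}.$$
Using the equivalence of the Sobolev norm with the $L^p$-norms of the left-invariant derivatives $\partial_y^\beta$ up to order $\ell_0$, I would raise this to the $p$-th power, integrate in $x$, and apply Fubini to get
$$\|Af\|_{L^p(G)}^p\le C\sum_{|\beta|\le\ell_0}\int_G\n{\partial_y^\beta A_yf}_{L^p(G)}^p\,dy.$$

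It then remains to bound $\|\partial_y^\beta A_yf\|_{L^p}$ uniformly in $y$, and here Theorem \ref{lepr} applies directly. For fixed $y$ the operator $\partial_y^\beta A_y$ is left-invariant with symbol $\partial_y^\beta\sigma(y,\xi)$, and by hypothesis
$$\n{\D_\xi^\alpha\partial_y^\beta\sigma(y,\xi)}_{op}\le C_{\alpha,\beta}\jp{\xi}^{-m_0-\rho|\alpha|+\delta|\beta|}\le C_{\alpha,\beta}\jp{\xi}^{-r-\rho|\alpha|},\qquad |\alpha|\le\kappa,$$
where $r=\kappa(1-\rho)|\frac1p-\half|$; the last inequality uses exactly the hypothesis $m_0\ge r+\delta([\frac np]+1)\ge r+\delta|\beta|$ for $|\beta|\le\ell_0$. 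Theorem \ref{lepr} then gives $\|\partial_y^\beta A_yf\|_{L^p}\le C\|f\|_{L^p}$ with a constant uniform in $y$ (since the $C_{\alpha,\beta}$ do not depend on $y$); summing the finitely many $\beta$ and integrating over the finite-measure group $G$ produces $\|Af\|_{L^p}\le C\|f\|_{L^p}$, and density of $C^\infty(G)$ completes the proof. I expect the one genuine point to be the passage from the diagonal to the integrated bound: the freezing itself is harmless, but it is the Sobolev embedding in the frozen variable that simultaneously forces the requirement $\ell>\frac np$ and generates precisely the correction $\delta([\frac np]+1)$ in the order $m_0$, and matching these two indices is the whole content of the estimate.
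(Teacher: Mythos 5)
Your proof is correct and is essentially the paper's own argument: both freeze the spatial variable to obtain left-invariant operators $A_y$ with symbols $\sigma(y,\xi)$ (equivalently, convolution with the frozen right-kernel), dominate the diagonal value $Af(x)=A_xf(x)$ by a Sobolev embedding in the frozen variable, apply Fubini, and then invoke Theorem \ref{lepr} uniformly in $y$, using the hypothesis on $m_0$ to absorb the loss $\delta|\beta|$. Your one refinement---running the Sobolev embedding at order $\ell_0=[\frac{n}{p}]+1$ rather than at $\ell$---is actually the correct way to read the paper's proof, since the assumption on $m_0$ only compensates $\delta|\gamma|$ for $|\gamma|\leq[\frac{n}{p}]+1$, so restricting the embedding to order $\ell_0\leq\ell$ is exactly what makes the final uniform-boundedness step go through when $\ell>[\frac{n}{p}]+1$ and $\delta>0$.
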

\begin{proof} We first write $$Af(x)=(f\ast r_{A}(x))(x),$$ where
\[r_{A}(x)(y)=R_A(x,y)\]
denotes the right-convolution kernel of $A$. Let
\[A_yf(x):=(f\ast r_{A}(y))(x),\]
so that $A_xf(x)=Af(x).$

\medskip

Now we see that 
\[\|Af\|_{L^p(G)}^p=\int_G|A_xf(x)|^pdx\leq \int_G\sup\limits_{y\in G}|A_yf(x)|^pdx.\]

By applying the Sobolev embedding theorem, we obtain
\[\sup\limits_{y\in G}|A_yf(x)|^p\leq C\sum\limits_{|\gamma|\leq \ell}\int_G|\partial_y^{\gamma}A_yf(x)|^pdy.\]
Now, by Fubini Theorem we have
\begin{align*} \|Af\|_{L^p(G)}^p\leq &C \sum\limits_{|\gamma|\leq \ell}\int_G\int_G|\partial_y^{\gamma}A_yf(x)|^pdxdy \\
\leq &C \sum\limits_{|\gamma|\leq \ell}\sup\limits_{y\in G}\int_G|\partial_y^{\gamma}A_yf(x)|^pdx \\
= &C \sum\limits_{|\gamma|\leq \ell}\sup\limits_{y\in G}\|\partial_y^{\gamma}A_yf\|_{L^p(G)}^p \\
\leq &C \sum\limits_{|\gamma|\leq \ell}\sup\limits_{y\in G}\|f\mapsto f\ast\partial_y^{\gamma}r_A(y)\|_{\mathcal{L}(L^p(G))}^p\|f\|_{L^p(G)}^p. 
\end{align*}
Thus, the operator $A$ will be bounded on $L^p(G)$ provided that the left-invariant operators $$f\mapsto f\ast\partial_y^{\gamma}r_A(y)$$ are uniformly  bounded on $L^p(G)$ with respect to $y\in G, |\gamma|\leq\ell$. We shall now estimate for each  $y\in G$, the conditions under which such operators are bounded on $L^p(G)$ according to Theorem \ref{lepr}. The symbol of the left-invariant operator  $D_{y, \gamma}:f\mapsto f\ast\partial_y^{\alpha}r_A(y)$ is given by $$\sigma_{D_{y, \gamma}}(\xi)=\partial_y^{\gamma}\sigma(y,\xi).$$
From \eqref{aieq1} we have
\beq\label{iha1}\|\D_{\xi}^{\alpha}\sigma_{D_{y, \gamma}}(\xi)\|_{op}\leq C_{\alpha, \beta}\langle\xi\rangle^{-m_0+\delta|\gamma|-\rho|\alpha|},\eq
for all multi-indices $\alpha, \beta$ with $|\alpha|\leq \kappa$, $|\gamma|\leq\ell$ and for all $[\xi]\in \widehat{G}$.\\

Hence $D_{y, \gamma}$ is bounded on $L^p(G)$ provided that $$-m_0+\delta|\gamma|\leq -r=-\kappa(1-\rho)\left|\frac{1}{p}-\half\right|$$ and $|\gamma|\leq \ell$. But this follows from the condition $$m_0\geq\kappa(1-\rho)\left|\frac{1}{p}-\half\right|+\delta([\frac{n}{p}]+1),$$ since $\ell\geq [\frac{n}{p}]+1$ and \eqref{iha1} holds for $|\gamma|\leq [\frac{n}{p}]+1$. 
Then, the operators $D_{y, \gamma}$ are uniformly bounded on $L^p(G)$ with respect to $y\in G, |\gamma|\leq\ell$, which concludes the proof.
\end{proof}

\begin{rem} (i) If in Theorem \ref{thpr} the operator $A$ is left-invariant, then the symbol $\sigma$ depends only on $\xi$ and 
 the conditions of the theorem recover those of Theorem \ref{lepr}.\\
(ii) In Theorem \ref{thpr} the usual condition $\delta\leq\rho$ is not imposed, and in particular $\delta>\rho$ is allowed.\\
(iii) For relatively large values of $p$, for instance if $p>n$, the Theorem \ref{thpr} only requires smoothness of the first order with respect to $x$ for the symbol $\sigma$, i.e. reduces to conditions on first order derivatives only. \\
(iv) By (ii) and (iii), in the extreme situation $\rho=0, \delta=1$, and for $p>n$, the condition \eqref{aieq1} with $m_0=\kappa(1-\rho)\left|\frac{1}{p}-\half\right|+\delta([\frac{n}{p}]+1)$ takes the form 
\beq\label{aieq1c}\|\partial_x^{\beta}\D_{\xi}^{\alpha}\sigma(x,\xi)\|_{op}\leq C_{\alpha, \beta}\langle\xi\rangle^{-\kappa\left|\frac{1}{p}-\half\right| +(|\beta|-1)}\eq
for all multi-indices $\alpha, \beta$ with $|\alpha|\leq \kappa$, $|\beta|\leq 1$ and for all $[\xi]\in \widehat{G}$. 

We note that in such situation, the required regularity $(=1)$ is independent of the dimension $\dim G=n$, which is in contrast to the situation in the Euclidean setting. In particular, that is the case of the finite regularity improved version of Fefferman's bounds obtained by Li and Wang in \cite{wa-li:lp}. 

For values near $p=2$ the situation is opposite and the condition in Theorem \ref{thpr} does not improve C. Fefferman's type conditions that can be obtained for $C^{\infty}$-smooth symbols.  We will obtain sharper conditions on the symbol but requiring $C^{\infty}$-smoothness. In particular the usual restriction $\rho\geq\half$ for $(\rho, \delta)$ classes on manifolds will not be imposed here as an advantageous consequence of the global symbolic calculus  on compact Lie groups at our disposal. 
\end{rem}

\begin{rem}
Recently, the Mihlin multiplier theorem obtained by the second author and Wirth in \cite{Ruzhansky-Wirth:Lp-FAA,Ruzhansky-Wirth:Lp-Z} has been reobtained by Fischer in
\cite{Fischer-Horm} using different collections of difference operators. If the integer part of $n/2$ is odd, the orders of required difference operators coincide, while if it is even, the order is improved by one. However, at the same time, the collections of difference operators one has to work with are different: the difference operators in \cite{Fischer-Horm} come from fundamental representations of the group, while our collection $\D^\alpha$ comes from the finite decomposition of the adjoint representation into irreducible components. However, since they are related, it is probable that the evenness of $\kappa$ can be removed for our collection of difference operators $\D^\alpha$.

In any case, if in \eqref{aieq1} one replaces the collection $\D^\alpha$ of difference operators by the collection of difference operators associated to fundamental representations of the group, a simple modification of the proof yields the statement of Theorem \ref{thpr} 
with $\kappa$ being the smallest integer larger than $\frac{n}{2}$ without requiring its evenness, giving an improvement of the order by one for half of the dimensions. 
\end{rem}

\subsection{$C^{\infty}$-smooth symbols}

We turn now to a different perspective by looking for conditions for $C^{\infty}$-smooth symbols. We will employ the geodesic distance on the group $G$ and it will be denoted  by $d$. For the distances from the unit element $e$ we will write $|y|=d(y,e)$. The corresponding $BMO$ space with respect to this distance will
 be denoted by $BMO(G)$. The following lemma will be useful to obtain  
 $L^{\infty}-BMO(G)$ bounds by applying partitions of unity.
 
Here and in the sequel it will be also useful to introduce the number
$$a:=1-\rho$$
 that we will use everywhere without special notice. Before formulating the following lemma
 we record asymptotic properties that will be of use on several occasions:
 asymptotically as $\lambda\to\infty$  we have
\begin{equation}\label{EQ:dims1}
\sum_{\jp{\xi}\leq\lambda}d_{\xi}^{2} \jp{\xi}^{\alpha n}\asymp \lambda^{(\alpha+1)n}
\; \textrm{ for }\; \alpha>-1,
\end{equation}
and
\begin{equation}\label{EQ:dims2}
\sum_{\jp{\xi}\geq\lambda}d_{\xi}^{2} \jp{\xi}^{\alpha n}\asymp \lambda^{(\alpha+1)n}
\; \textrm{ for }\; \alpha<-1.
\end{equation}
We refer to \cite{ar:pq1} for their proof.
 
\begin{lem}\label{l1wa} Let $G$ be a compact Lie group of dimension $n$ and let $0<a<1$. Let $0\leq\delta<1-a$. Let  $\sigma\in \mathscr{S}_{1-a,\delta}^{-\frac{na}{2}}(G)$ be supported in
\[\{(x,\xi)\in G\times\widehat{G}: R\leq \langle\xi\rangle\leq 3R\} ,\]
for some $R>0$. Then $\sigma(x,D)$ extends to a bounded operator from $L^{\infty}(G)$ to $L^{\infty}(G)$, and for $l\geq \frac{n}{2}$ we have
\[\|\sigma(x,D)f\|_{L^{\infty}(G)}\leq C\|\sigma\|_{l,\mathscr{S}_{1-a,\delta}^{-\frac{na}{2}}}\|f\|_{L^{\infty}(G)},\]
with $C$ independent of $\sigma$, $f$ and $R$.
\end{lem}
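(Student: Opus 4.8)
The plan is to reduce the $L^\infty(G)$-bound to an $L^1$-estimate on the convolution kernel, exactly in the spirit of Lemma \ref{l1a} and Proposition \ref{cjma1}. Writing $\sigma(x,D)f = (f * \mathcal F_G^{-1}\sigma(x,\cdot))(x)$, by Lemma \ref{l1a} it suffices to bound $\esssup_{x}\|\mathcal F_G^{-1}\sigma(x,\cdot)\|_{L^1(G)}$ by $C\|\sigma\|_{l,\mathscr{S}_{1-a,\delta}^{-na/2}}$ uniformly in the dyadic scale $R$. So I would set $k_x(y):=\mathcal F_G^{-1}\sigma(x,\cdot)(y)=\sum_{[\xi]}d_\xi\Tr(\xi(y)\sigma(x,\xi))$ and estimate $\int_G |k_x(y)|\,dy$, the crucial point being that the constant must be independent of $R$.

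\medskip

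The standard device here is to split $G$ into a region near the identity and its complement, using the geodesic distance $|y|=d(y,e)$. On the near-identity piece $\{|y|\le R^{-a}\}$ I would apply the Cauchy--Schwarz inequality together with Parseval \eqref{EQ:Parseval}, exactly as in \eqref{1eqpr}: since $\sigma$ is supported in $R\le\jp{\xi}\le 3R$, one has
\[
\int_{|y|\le R^{-a}}|k_x(y)|\,dy
\le |\{|y|\le R^{-a}\}|^{1/2}
\left(\sum_{R\le\jp{\xi}\le 3R}d_\xi\|\sigma(x,\xi)\|_{\HS}^2\right)^{1/2}.
\]
The volume factor contributes $R^{-na/2}$ (for small balls the Haar measure of $\{|y|\le r\}$ is comparable to $r^n$), while $\|\sigma(x,\xi)\|_{\HS}^2\le d_\xi\|\sigma(x,\xi)\|_{op}^2\le d_\xi\jp{\xi}^{-na}\|\sigma\|_{l}^2$, and by \eqref{EQ:dims1} the dyadic sum $\sum_{R\le\jp{\xi}\le 3R}d_\xi^2\jp{\xi}^{-na}$ is of order $R^{(1-a)n}\cdot R^{0}$, wait — more precisely $\asymp R^{n}R^{-na}=R^{n(1-a)}$; combining with the $R^{-na/2}$ from the volume gives exactly an $R$-independent bound. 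The remaining off-diagonal piece $\{|y|> R^{-a}\}$ is handled by exploiting the difference operators: since $\D_\xi$ acts on the Fourier side as multiplication by $q(y)$ on the physical side, and $\rho^2(y)\asymp|y|^2$ near $e$, one has $|y|^{2N}|k_x(y)|\lesssim |\mathcal F_G^{-1}(\rhodiff^N\sigma)(x,\cdot)(y)|$ up to the distance comparison; applying Cauchy--Schwarz to this and using the $(1-a,\delta)$-estimates on $\rhodiff^N\sigma$ (which gain $\jp{\xi}^{-(1-a)\cdot 2N}\sim R^{-2N(1-a)}$) produces an integrable tail $\int_{|y|>R^{-a}}|y|^{-2N}\,dy$ with a matching power of $R$, provided $2N>n$, i.e. $N$ chosen with $l\ge n/2$ derivatives available.

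\medskip

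The main obstacle will be the bookkeeping of the powers of $R$ in the off-diagonal estimate and verifying that the two regions' contributions are \emph{both} bounded independently of $R$ — this is precisely where the exponent $-na/2$ in the order of the symbol is calibrated, so that the $R^{-na/2}$ volume gain on the near piece and the $R^{-2N(1-a)}$ derivative gain on the far piece exactly cancel the growth coming from the dyadic sums \eqref{EQ:dims1}--\eqref{EQ:dims2}. A secondary technical point is controlling $\rhodiff^N\sigma$ by the seminorm $\|\sigma\|_{l,\mathscr{S}_{1-a,\delta}^{-na/2}}$ via the Leibniz property of the difference operators $\D^\alpha$ noted at the end of Section \ref{SEC:Prelim}, and relating the difference-operator weight $\rho^2(y)$ to the geodesic distance $|y|$ near the identity. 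Finally, one must ensure that the restriction $l\ge n/2$ suffices for choosing $N$ with $2N>n$; choosing $N=\lceil n/4\rceil$ on each of the (roughly $n$) difference directions keeps the total order of differencing within the allotted seminorm.
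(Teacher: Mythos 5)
Your overall strategy is the same as the paper's: reduce the $L^\infty$-bound to the kernel estimate $\esssup_x\|\mathcal F_G^{-1}\sigma(x,\cdot)\|_{L^1(G)}\lesssim\|\sigma\|_{l,\mathscr{S}_{1-a,\delta}^{-na/2}}$ via Lemma \ref{l1a}, split $G$ into a small ball around $e$ and its complement, and use Cauchy--Schwarz plus Parseval on each piece, with difference operators supplying decay on the far piece. However, your choice of splitting radius $R^{-a}$ is wrong, and the cancellation you assert in the near-identity region does not occur. With radius $b=R^{-a}$ the volume factor is $\mu(\{|y|\le b\})^{1/2}\asymp R^{-na/2}$, while the shell sum gives $\bigl(\sum_{R\le\jp{\xi}\le 3R}d_\xi^2\jp{\xi}^{-na}\bigr)^{1/2}\asymp R^{n(1-a)/2}$; their product is $R^{n(1-2a)/2}$, which is \emph{not} $R$-independent and blows up as $R\to\infty$ whenever $a<\frac12$. (You even flag this with "wait --- more precisely", but then assert the cancellation anyway.) The correct choice, and the one made in the paper, is $b=R^{a-1}=R^{-(1-a)}$, the reciprocal of the natural frequency scale of a type-$(1-a)$ symbol at frequency $R$: then the volume factor $R^{n(a-1)/2}$ cancels $R^{n(1-a)/2}$ exactly, as in \eqref{inrm}--\eqref{1eqpra}.

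The same miscalibration breaks your far region. With $b=R^{-a}$, the tail factor is $\bigl(\int_{|y|>b}|y|^{-4N}dy\bigr)^{1/2}\asymp R^{-a(n-4N)/2}$ and the symbol factor (using the gain of $\rhodiff^{N}$ on the shell) is $\asymp R^{(n-na-4N(1-a))/2}$, so the total exponent is $\frac n2-an+2N(2a-1)$; under the tail-integrability requirement $4N>n$ this exponent is strictly positive whenever $a>\frac12$. Hence for every $a\neq\frac12$ at least one of your two regions contributes an unbounded power of $R$, so the proof as written fails; replacing $R^{-a}$ by $R^{a-1}$ throughout makes both exponents vanish identically and recovers the paper's argument. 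Two secondary points: after Cauchy--Schwarz the tail involves $|y|^{-4N}$, so the condition is $4N>n$ rather than your stated $2N>n$; and your closing remark about "choosing $N=\lceil n/4\rceil$ on each of the (roughly $n$) difference directions" does not parse --- $\rhodiff^{N}$ is a single difference operator of order $2N$, consuming seminorms of order $2N\ge n/2$. The paper avoids this bookkeeping by using one scalar difference operator $\Delta_q$ with $q$ vanishing at $e$ to order $l$, $|q(y)|\asymp|y|^{l}$, which needs only $2l>n$ and keeps the seminorm order at $l$, matching the hypothesis $l\ge\frac n2$.
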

\begin{proof} Let $\sigma\in \mathscr{S}_{\rho,\delta}^{-m}(G)$ be supported in
 \beq\{(x,\xi)\in G\times\widehat{G}: R\leq \langle\xi\rangle\leq 3R \},\label{iqsu}\eq
for some fixed $R>0$. In order to prove Lemma \eqref{l1wa} we will apply Lemma \ref{l1a}. We split $G$ into the form $$G=\{y\in G: |y|\leq b\}\cup \{y\in G: |y|> b\},$$ where $b=R^{a-1}$.

By applying Cauchy-Schwarz inequality, Parseval identity and the inequality \eqref{ophsi} to the decomposition $$\sigma(x,\xi)=\sigma(x,\xi)I_{d_{\xi}}$$  we obtain 
\begin{align}\int_{|y|\leq b}|\mathcal{F}_G^{-1}\sigma(x,\cdot)(y)|dy &\leq  \left(\mu(\{|y|\leq b\})\right)^{\half} \left(\int_G|\mathcal{F}_G^{-1}\sigma(x,\cdot)(y)|^2dy\right)^\half \nonumber\\
&=\left(\mu(\{|y|\leq R^{a-1}\})\right)^{\half}\left(\sum\limits_{[\xi]\in \widehat{G}}d_{\xi}\|\sigma(x,\xi)\|_{\HS}^2\right)^\half \nonumber\\
&\leq CR^{\frac{n(a-1)}{2}}\left(\sum\limits_{\{R\leq \langle\xi\rangle\leq 3R\} }d_{\xi}\|\sigma(x,\xi)\|_{\HS}^2\right)^\half \nonumber\\
&\leq  CR^{\frac{n(a-1)}{2}}\left(\sum\limits_{\{R\leq \langle\xi\rangle\leq 3R\} }d_{\xi}^2\|\sigma(x,\xi)\|_{op}^2\right)^\half \nonumber\\
&\leq C\|\sigma\|_{0,\mathscr{S}_{1-a,\delta}^{-\frac{na}{2}}}R^{\frac{n(a-1)}{2}}\left(\sum\limits_{\{R\leq \langle\xi\rangle\leq 3R\} }d_{\xi}^2\langle\xi\rangle^{-na}\right)^\half \nonumber\\
&\leq  C\|\sigma\|_{0,\mathscr{S}_{1-a,\delta}^{-\frac{na}{2}}}R^{\frac{n(a-1)}{2}}R^{\frac{n(1-a)}{2}} \label{inrm}\\
&\leq C\|\sigma\|_{0,\mathscr{S}_{1-a,\delta}^{-\frac{na}{2}}} \,<\infty,\label{1eqpra}
\end{align}
with $C$ independent of $R$ and $\sigma$. For the inequality \eqref{inrm} we have applied the estimate \eqref{EQ:dims1}.

We now consider the integral $\int_{|y|\geq b}|\mathcal{F}_G^{-1}\sigma(x,\cdot)(y)|dy$. To analyse it we take the difference operator $\Delta_q$ associated to $q$  that vanishes at $e$ of order $l$ and $e$ is its isolated zero, i.e., there exist constants $C_1, C_2>0$ such that
\[C_1|y|^{l}\leq |q(y)|\leq C_2|y|^l.\] 

We first note that $|q(y)|\leq C|y|^l$, for small $|y|$, e.g. $|q(y)|\leq Cd^l$,
 for $|y|\leq d$ for some suitable $d$. We have, using the boundedness of $r$,
\begin{align} \int_{|y|\geq b}|\mathcal{F}_G^{-1}\sigma(x,\cdot)(y)|dy &=\int_{|y|\geq b}\frac{|q(y)(\mathcal{F}_G^{-1}\sigma(x,\cdot))(y)|}{|q(y)|}dy\nonumber\\
&\leq\left(\int\limits_{\{|y|\geq b\}}|q(y)|^{-2} dy\right)^{\half} \left(\int_G|q(y)(\mathcal{F}_G^{-1}\sigma(x,\cdot))(y)|^2dy\right)^\half \nonumber\\
&\leq C\left(\int\limits_{\{|y|\geq b\}}|y|^{-2l}dy\right)^{\half}\left(\sum\limits_{\{R\leq \langle\xi\rangle\leq 3R\} }d_{\xi}\|\Delta_q\sigma(x,\xi)\|_{\HS}^2\right)^\half \nonumber\\
&\leq  C(b^{n-2l})^{\half}\left(\sum\limits_{\{R\leq \langle\xi\rangle\leq 3R\}}d_{\xi}^2\|\Delta_q\sigma(x,\xi)\|_{op}^2\right)^\half \nonumber\\
&\leq C\|\sigma\|_{l,\mathscr{S}_{1-a,\delta}^{-\frac{na}{2}}}R^{(a-1)(\frac{n}{2}-l)}\left(\sum\limits_{\{R\leq \langle\xi\rangle\} }d_{\xi}^2\langle\xi\rangle^{-na-2l(1-a)}\right)^\half \nonumber\\
&\leq  C\|\sigma\|_{l,\mathscr{S}_{1-a,\delta}^{-\frac{na}{2}}}R^{(a-1)(\frac{n}{2}-l)}R^{(1-a)(\frac{n}{2}-l)} \label{1eqprac}\\
&\leq C\|\sigma\|_{l,\mathscr{S}_{1-a,\delta}^{-\frac{na}{2}}}<\infty ,\nonumber
\end{align}
with $C$ independent of $R$ and $\sigma .$ For the estimation of the integral $\int\limits_{\{|y|\geq b\}}|y|^{-2l}dy$ we note that the essential case
 is $b$ small, and so the bound can be reduced  to a local estimation. 
\end{proof}

We will now establish a  $L^{\infty}(G)-BMO(G)$ estimate which will have as a consequence the main results
 for smooth symbols.  The space $BMO(G)$ correspond to the system of balls $B(x,r)$ determined by the geodesic distance $d$. 
 
\begin{thm}\label{mth1a} 
Let $G$ be a compact Lie group of dimension $n$ and let $0<\rho<1$. Let $0\leq\delta<\rho$ and $\sigma\in \mathscr{S}_{\rho,\delta}^{-\frac{n(1-\rho)}{2}}(G)$. Then $\sigma(x,D)$ extends to a bounded operator from $L^{\infty}(G)$ to $BMO(G)$ and moreover, for $l>\frac n2$ we have 
\[\|\sigma(x,D)f\|_{BMO(G)}\leq C\|\sigma\|_{l,\mathscr{S}_{\rho,\delta}^{-\frac{n(1-\rho)}{2}}}\|f\|_{L^{\infty}(G)},\]
with $C$ independent of $\sigma$ and $f$.
\end{thm}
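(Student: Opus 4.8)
The plan is to reduce the $L^\infty(G)\to BMO(G)$ estimate to the dyadic building blocks controlled by Lemma~\ref{l1wa}, using a Littlewood--Paley decomposition of the symbol in the $\langle\xi\rangle$ variable together with the standard $BMO$ characterisation via oscillation over geodesic balls. First I would fix a dyadic partition of unity $1=\psi_0(\xi)+\sum_{j\geq 1}\psi_j(\xi)$ with $\psi_j$ supported in $\{2^{j}\leq\langle\xi\rangle\leq 2^{j+2}\}$ (so each piece meets the annulus hypothesis of Lemma~\ref{l1wa} with $R\asymp 2^j$), and set $\sigma_j(x,\xi)=\sigma(x,\xi)\psi_j(\xi)$, $A_j=\sigma_j(x,D)$. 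Since $\sigma\in\mathscr{S}_{\rho,\delta}^{-n(1-\rho)/2}(G)$ with $\rho=1-a$, each $\sigma_j$ lies in the same class supported in a single dyadic shell, so Lemma~\ref{l1wa} gives the uniform bound $\|A_jf\|_{L^\infty}\leq C\|\sigma\|_{l,\mathscr{S}}\|f\|_{L^\infty}$ with $C$ independent of $j$; the low-frequency term $A_0$ is handled directly by Corollary~\ref{the1a} (or Lemma~\ref{l1a}).

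Next I would estimate the mean oscillation of $Af=\sum_j A_jf$ over a geodesic ball $B=B(x_0,r)$. The standard device is to split the sum at the scale adapted to $r$, namely at the index $N$ with $2^{-N}\asymp r^{1/a}$ (equivalently $r\asymp 2^{-Na}$), writing $Af=\sum_{j\le N}A_jf+\sum_{j>N}A_jf$. For the low-frequency part $\sum_{j\le N}A_jf$, whose oscillation on $B$ I bound by $r$ times the supremum of its gradient, I would use the derivative estimates coming from $\partial_x$ applied to the symbol together with the annulus estimate from Lemma~\ref{l1wa}, summing the resulting geometric-type series in $j\le N$; the gain of $r$ from the mean-value inequality must balance the growth $2^{j a}$ coming from differentiating the kernel, which is exactly where the choice $2^{-Na}\asymp r$ is forced. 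For the high-frequency part $\sum_{j>N}A_jf$ I would instead bound each $\|A_jf\|_{L^\infty}$ by $C\|f\|_{L^\infty}$ via Lemma~\ref{l1wa} and extract decay in $j$ from the kernel estimates (the cancellation/size of $\mathcal F_G^{-1}\sigma_j$ off a shrinking neighbourhood of $e$), so that $\sum_{j>N}$ converges geometrically to something bounded by $\|f\|_{L^\infty}$ uniformly in $B$.

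To make both halves quantitative I would record, for the kernel $k_j(x,\cdot)=\mathcal F_G^{-1}\sigma_j(x,\cdot)$, the pointwise and $L^1$ estimates obtained exactly as in the proof of Lemma~\ref{l1wa} but keeping track of the $R\asymp 2^j$ powers: the two regimes $|y|\le b$ and $|y|\ge b$ with $b=R^{a-1}=2^{j(a-1)}$ produce, respectively, the $L^1$-mass estimate and, after applying the difference operator $\Delta_q$ vanishing to high order at $e$, rapid decay of $\int_{|y|\ge b}|k_j(x,y)|\,dy$; differentiating in $x$ costs $\langle\xi\rangle^{\delta|\beta|}$, which since $\delta<\rho$ is dominated by the gains, so the telescoping in $j$ remains summable. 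Assembling these, the oscillation of $Af$ over any $B(x_0,r)$ is $\le C\|\sigma\|_{l,\mathscr{S}}\|f\|_{L^\infty}$ uniformly, which is precisely the $BMO(G)$ bound.

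The main obstacle I expect is the high-frequency tail: controlling $\sum_{j>N}A_jf$ requires genuine decay in $j$, not merely the uniform $L^\infty$ bound from Lemma~\ref{l1wa}, and this decay has to come from off-diagonal kernel cancellation measured against the geodesic ball $B$. Extracting that decay with the noncommutative difference operators $\Delta_q$ (and verifying the Leibniz-type behaviour needed when $\Delta_q$ meets the cutoff $\psi_j$), while simultaneously keeping the constants independent of $j$ and of the ball, is the delicate point; the condition $\delta<\rho$ is what guarantees the $x$-derivatives do not destroy the summability, and the even-order/high-order vanishing of $q$ at $e$ is what supplies enough powers of $|y|^{-1}$ to win against the volume of $\{|y|\ge b\}$.
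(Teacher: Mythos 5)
Your low-frequency argument contains a quantitative error that breaks your choice of splitting scale. Differentiating $A_jf$ in $x$ produces the symbol $\sigma_{X_k}(\xi)\sigma_j(x,\xi)+X_k\sigma_j(x,\xi)$, and the symbol of a left-invariant vector field satisfies $\|\sigma_{X_k}(\xi)\|_{op}\sim\langle\xi\rangle\sim 2^{j}$; so one $x$-derivative costs a full factor $2^{j}$ (the loss $2^{j\delta}$ from hitting $\sigma_j$ itself is smaller since $\delta<\rho$), not the factor $2^{ja}$ you assert. The mean-value bound for the low part is therefore $r\sum_{j\le N}2^{j}\sim r\,2^{N}$, which forces the split at $2^{N}\asymp r^{-1}$, exactly as in the paper's proof: there $\sigma^{0}$ is supported in $\langle\xi\rangle\lesssim r^{-1}$ and decomposed into pieces at $\langle\xi\rangle\sim 2^{-j}r^{-1}$ whose seminorms in $\mathscr{S}_{1-a,\delta}^{-na/2}$ are $\lesssim 2^{-j}r^{-1}$, so that Lemma \ref{l1wa} and summation give $\|\partial_{x_k}\sigma^0(x,D)f\|_{L^{\infty}}\lesssim r^{-1}\|f\|_{L^{\infty}}$. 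With your choice $2^{N}\asymp r^{-1/a}$ the low-frequency estimate diverges like $r\cdot r^{-1/a}=r^{1-1/a}$ as $r\to0$.

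The more serious gap is the high-frequency tail, which you yourself flag as the main obstacle: the decay in $j$ you hope to extract from kernel estimates does not exist, and no argument of that type can close the proof. The order $-\frac{na}{2}$ is exactly critical, so Lemma \ref{l1wa} gives only a uniform bound; the far-field estimate inside its proof yields $\int_{|z|\ge r}|\mathcal F_G^{-1}\sigma_j(x,\cdot)(z)|\,dz\lesssim (r2^{j\rho})^{\frac n2-l}$ (recall $\rho=1-a$), which decays in $j$ only once $2^{j}\gtrsim r^{-1/\rho}$, leaving the whole intermediate range $r^{-1}\lesssim \langle\xi\rangle\lesssim r^{-1/\rho}$ uncontrolled. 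Worse, a bound of the form $\sup_{x\in B}\bigl|\sum_{j>N}A_jf(x)\bigr|\le C\|f\|_{L^{\infty}}$, uniform over balls, cannot be true: applied to invariant symbols it would (together with the trivial bound for the finitely many terms $j\le N$) make every invariant operator of order $-\frac{na}{2}$ bounded on $L^{\infty}(G)$, contradicting the Hirschman--Wainger example discussed in Remark \ref{lare1}; this failure is the very reason the target space is $BMO(G)$ rather than $L^{\infty}(G)$. What the paper does instead --- and this mechanism is entirely absent from your proposal --- is to estimate only the average $\frac{1}{|B|}\int_B|\sigma^1(x,D)f|$, never the supremum, by an $L^2$-localisation argument: fix a cut-off $\phi\ge1$ on $B$, write $\phi\,\sigma^1(x,D)f=\sigma^1(x,D)(\phi f)+[\phi,\sigma^1(x,D)]f$, factor the first term as $(\sigma^1(x,D)\circ L)\bigl(L^{-1}(\phi f)\bigr)$ with $L$ a power of $I-\mathcal{L}_G$ chosen so that $\sigma^1(x,D)\circ L$ has order $0$ (hence is $L^2$-bounded by Fischer's theorem), use the positivity of $L^{-1}$ to get $\|L^{-1}(\phi f)\|_{L^2}\le\|f\|_{L^{\infty}}\|L^{-1}\phi\|_{L^2}\lesssim\|f\|_{L^{\infty}}|B|^{1/2}$, and then apply Cauchy--Schwarz on $B$. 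Only the commutator term, which genuinely gains a factor $\langle\xi\rangle^{-\rho}$ through the Taylor expansion of $\phi$ in difference operators, is treated by your kind of dyadic summation via Lemma \ref{l1wa}. Without this $L^2$/positivity mechanism the high-frequency part cannot be controlled, so the proposal as written does not prove the theorem.
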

\begin{proof} 
Here and everywhere we will write $a:=1-\rho$.
Let us fix $f\in L^{\infty}(G)$ and $B=B(x_0,r)\subset G$. We will show that there exist an integer $k$ and a constant $C>0$ independents of $f$ and $B$ such that 
\beq
 \frac{1}{\mu(B(x_0,r))}\int\limits_{B}|\sigma(x,D)f(x)-g_B|dx\leq
C \|\sigma\|_{k,\mathscr{S}_{1-a,\delta}^{-\frac{na}{2}}}\|f\|_{L^\infty} ,\label{eq:pbmo}
\eq
where we have written $g=\sigma(x,D)f$ and $g_B=\frac{1}{\mu(B(x_0,r))}\int\limits_{B}gdx.$\\

We also write $$R_0=\sup\{R: \exists x\in G\, \textrm{ such that }
 B(x,R)\subset G\}.$$ We split $\sigma(x,\xi)$ into two symbols,
$$\sigma=\sigma^0 + \sigma^1,$$ with $\sigma^0 $ supported in 
$\langle\xi\rangle\leq 2R_0r^{-1}$ and $\sigma^1$ supported in 
$\langle\xi\rangle\geq \half R_0r^{-1}$, satisfying the following estimates

\beq\label{eq:dec1}\|\sigma^0\|_{l,\mathscr{S}_{1-a,\delta}^{-\frac{na}{2}}},\,\|\sigma^1\|_{l,\mathscr{S}_{1-a,\delta}^{-\frac{na}{2}}}\leq C_l\|\sigma\|_{l,\mathscr{S}_{1-a,\delta}^{-\frac{na}{2}}}, \mbox{ for every
}l\geq 1.\eq

In order to establish the existence of the above splitting (\ref{eq:dec1}) one can consider a function 
$0\leq\gamma\in C^\infty(\ar)$ 
 wich equals to $1$ if $|t|\leq \half$ and with ${\mbox{ supp}} \,\gamma=\{|t|\leq 1\}$. We set
\[\tilde{\gamma}(\xi)=\gamma(r\langle\xi\rangle)\]
 and 
\[\sigma^0(x,\xi)=\sigma(x,\xi)\tilde{\gamma}(\xi). \]
Moreover the seminorms of $\sigma^0$ can be controlled by those of $\sigma$ by
\beq\|\sigma^0\|_{l,\mathscr{S}_{1-a,\delta}^{-\frac{na}{2}}}\leq C_l\|\sigma\|_{l,\mathscr{S}_{1-a,\delta}^{-\frac{na}{2}}}, \mbox{ for all
}l\geq 1.\label{ikbf}\eq
Now, by taking $\sigma^1=\sigma-\sigma^0$, the estimate \eqref{ikbf} is still valid for $\sigma^1$.

Now we note that for a left invariant vector field $X$ on $G$ we have
\begin{align*}X(\xi(x)\sigma(x,\xi))&=X(\xi(x))\sigma(x,\xi)+\xi(x)X\sigma(x,\xi)\\
&=\xi(x)\sigma_X(\xi)\sigma(x,\xi)+\xi(x)X\sigma(x,\xi),
\end{align*}
where we have used the fact that $$\xi(x)\sigma_X(\xi)=(X\xi)(x)$$ from \eqref{EQ:A-symbol} applied to $A=X$.

Hence
\[ XAf(x)=\sum\limits_{[\xi]\in\Gh}d_{\xi}\Tr \left(\xi(x)(\sigma_X(\xi)\sigma(x,\xi)+X\sigma(x,\xi))\widehat{f}(\xi)\right) .\]

In particular, with $X=X_k=\partial_{x_k}$ being a left-invariant vector field we obtain
\begin{equation}\label{EQ:sigmas}
\partial_{x_k}\sigma^0(x,D)f(x)=\sigma'(x,D)f(x),
\end{equation}
where
\[\sigma'(x,\xi)=\sigma_{X_k}(\xi)\sigma(x,\xi)+X_k\sigma(x,\xi).\]
%Then
%\[\sigma'\in \mathscr{S}_{1-a,\delta}^{-\frac{na}{2}+1}.\]
By using a suitable partition of unity we write
\[\sigma'(x,\xi)=\sum\limits_{j=1}^{\infty}\rho_j(x,\xi),\]
with $\rho_j$ supported in $\langle\xi\rangle\sim 2^{-j}r^{-1}$, and such that
\beq\|\rho_j\|_{l,\mathscr{S}_{1-a,\delta}^{-\frac{na}{2}}}\leq C2^{-j}r^{-1}\|\sigma\|_{l,\mathscr{S}_{1-a,\delta}^{-\frac{na}{2}}}.\label{itykbf}\eq

 In order to construct such partition of unity, we consider $\eta:\ar\rightarrow\ar$ defined by
\[\eta(t)=\left\{\begin{array}{ll}
0&,\mbox{ if }|t|\leq 1 , \\
1&,\mbox{ if }|t|\geq 2.
\end{array}
\right.  \] 
\indent We put $\rho(t)=\eta(t)-\eta(2^{-1}t). $ Then 
\[{\mbox{ supp}} \,\rho=\{1\leq |t|\leq 4\}.\]
One can see that
\[1=\eta(t)+\sum\limits_{j=1}^\infty\rho(2^jt)\, ,\mbox{for all
}t\in\ar .\]
Indeed,
\[\eta(t)+\sum\limits_{j=1}^\ell\rho(2^jt)=\]
\[=\eta(t)+\eta(2t)-\eta(t)+\eta(2^2t)-\eta(2t)+\cdots+\eta(2^{\ell}t)-\eta(2^{\ell-1}) \]
\[=\eta(2^{\ell}t)\rightarrow 1 \mbox{ as }\ell\rightarrow\infty.\]
\indent 
In particular we can write $t=r\langle\xi\rangle$ and then
\[1=\eta(r\langle\xi\rangle)+\sum\limits_{j=1}^\infty\rho(r 2^j\langle\xi\rangle). \]
\indent The support of $\eta$ is  $\{|t|>1\}$ and if 
$r\langle\xi\rangle\leq 1$, then
 \[\eta(r\langle\xi\rangle)\equiv 0\]
and hence
\[1=\sum\limits_{j=1}^\infty\rho(r2^j\langle\xi\rangle)\]
for $\{\langle\xi\rangle\leq r^{-1}\}.$\\
\indent Now, since ${\mbox{ supp }} \,\sigma'={\mbox{ supp }} \,\sigma^0$ in view of \eqref{EQ:sigmas}, we have
\[\sigma'(x,\xi)=\sum\limits_{j=1}^\infty\rho(r 2^j\langle\xi\rangle)\cdot\sigma'(x,\xi).\]
\indent We set 
\[\rho_j(x,\xi)=\rho(r 2^j\langle\xi\rangle)\cdot\sigma'(x,\xi)=\alpha_j(\xi)\sigma'(x,\xi). \]
The estimate \eqref{itykbf} for the seminorms of $\rho_j$ now follows from the relation between $\sigma$ and $\sigma'$ as well as from the fact that $\rho_j(x,\xi)=\alpha_j(\xi)\sigma'(x,\xi)$. 

We now apply  Lemma \ref{l1wa} to the pieces $\rho_j$ obtaining 
\begin{align*} 
 \|\partial_{x_k}\sigma^0 (x,D)f\|_{L^\infty}&\leq
 \sum\limits_{j=0}^\infty\|\rho_j(x,D)f\|_{L^\infty}\\
&\leq Cr^{-1} \sum\limits_{j=0}^\infty
2^{-j}\|\sigma\|_{l,\mathscr{S}_{1-a,\delta}^{-\frac{na}{2}}} \|f\|_{L^\infty}\\
&\leq C r^{-1}\|\sigma\|_{l,\mathscr{S}_{1-a,\delta}^{-\frac{na}{2}}} \|f\|_{L^\infty}.
\end{align*}
An application of the Mean Value Theorem gives us
\[|\sigma^0(x,D)f(x)-g_B|\leq C\|\sigma\|_{l;\mathscr{S}_{1-a,\delta}^{-\frac{na}{2}}}
\|f\|_{L^\infty}.\]
Hence
\beq\frac{1}{|B(x_0,r)|}\int\limits_{B}|\sigma^0(x,D)f(x)-g_B|dx\leq
C \|\sigma\|_{l,\mathscr{S}_{1-a,\delta}^{-\frac{na}{2}}} \|f\|_{L^\infty} ,\eq
which gives  (\ref{eq:pbmo}) for $\sigma^0 $.

We now consider the term $\sigma^1$, we recall that we fixed a ball $B(x_0,r)\subset G$. We now also fix a cut-off function  $\phi$
 over $G$, with  $0\leq\phi\leq 10$, $ \phi\geq 1$ on 
 $B(x_0,r)$ and such that its Fourier transform 
$\widehat{\phi}$ verifies ${\mbox{supp } }(\widehat{\phi})\subset \{\langle\xi\rangle\leq (C^{-1}r)^{\frac{1}{1-a}}\}$. 
 Let us write
\beq\,\mbox{ }\mbox{ }\mbox{ }\,\,\,\phi(x)\cdot\sigma^1(x,D)f(x)=\sigma^1(x,D)(\phi
f)(x)+\left[\phi,\sigma^1(x,D)\right]f(x)= I+II \label{eq:dec}.\eq
\indent For the estimation of {\em I} we begin by factorising it in a suitable way. Let
 $L$ be the following power of the Laplacian on $G$, $L=(1-\lap)^{\frac{na}{2}}$.  By  \cite[Theorem 4.2]{Ruzhansky-Wirth:functional-calculus} 
 we have $L\in \Op\mathscr{S}_{1,0}^{na}(G)$ and it is known that $L^{-1}$ is a positive operator (cf. \cite{rs:lman}, \cite{g:man2}, \cite{g:man}), i.e., $L^{-1}(g)\geq 0$ if $g\geq
0$.  We write
\beq\sigma^1(x,D)(\phi f)=\left(\sigma^1(x,D)\circ
  L\right)\left( L^{-1}(\phi f)\right).\eq

Since $\Op\mathscr{S}_{1,0}^{\frac{na}{2}}(G)\subset \Op\mathscr{S}_{1-a,\delta}^{\frac{na}{2}}(G)$, we note
 that  $\sigma^1(x,D)\circ L$ is a pseudo-differential operator in $\Op\mathscr{S}_{1-a,\delta}^{0}$.
 We also have 
\beq L^{-1}:H^{-\frac{na}{2}}\longrightarrow L^2.\label{eq:normeL}\eq
\indent By the $L^2$ boundedness for operators of order $0$ (cf. \cite[Proposition 8.1]{vf:psecl}),
 applied to the operator  $\sigma^1(x,D)\circ L$, we deduce the existence 
 of a constant $C$ and an integer $l_0$ such that
\beq\|\sigma^1(x,D)(\phi f)\|_{L^2}^2\leq C \|\sigma
_A^1\|_{l_0;\mathscr{S}_{1-a,\delta}^{-\frac{na}{2}}}^2\cdot \|L^{-1}(\phi f)\|_{L^2}^2  .\eq
On the other hand, by \eqref{eq:normeL} we have 

\[\|L^{-1}(\phi f)\|_{L^2}^2\leq C\|\phi
f\|_{H^{-\frac{na}{2}}}^2.\]
%\indent We observe that
%\[\|\phi f\|_{H^{-\frac{na}{2}}}^2=\|L^{-1}(\phi f)\|_{L^2}^2 .\]
We also observe that $\|\widehat{\phi}(\xi)\|_{\HS}^2\leq C'd_{\xi}$. Indeed,
 by the definition of the Fourier transform on compact groups, since $\xi(x)$ is unitary and $0\leq\phi\leq 10$ we have
\[\|\widehat{\phi}(\xi)\|_{\HS}\leq\int_G \|\xi^*(x)\|_{\HS}|\phi(x)|dx\leq 10\sqrt{d_{\xi}},\]
where we have used the identities $$\|\xi^*(x)\|_{\HS}^2=\Tr(\xi^*(x)\xi(x))=\Tr(I_{d_{\xi}})=d_{\xi}.$$

Since  $L$ is a positive operator, we obtain
\begin{eqnarray*}
\|L^{-1}(\phi f)\|_{L^2}^2&\leq \|f\|_{L^\infty}^2
\|L^{-1}(\phi)\|_{L^2}^2\\
&\leq C_1\|f\|_{L^\infty}^2\|\phi\|_{H^{-\frac{na}{2}}}^2\\
&\leq C_1\|f\|_{L^\infty}^2(C^{-1}r)^{n}\\
&\leq C\|f\|_{L^\infty}^2 |B(x_0,r)|.
\end{eqnarray*}
For the estimation of $\|\phi\|_{H^{-\frac{na}{2}}}^2$ we  have used the following inequalities:
 \begin{eqnarray*}
\|\phi\|_{H^{-\frac{na}{2}}}^2&\leq \sum\limits_{\langle\xi\rangle\leq Cr^{\frac{1}{1-a}}}d_{\xi}\langle\xi\rangle ^{-na}\|\widehat{\phi}(\xi)\|_{\HS}^2\\
&\leq C'\sum\limits_{\langle\xi\rangle\leq Cr^{\frac{1}{1-a}}}d_{\xi}^2\langle\xi\rangle ^{-na}\\
&\leq C_2r^{\frac{1}{1-a}(1-a)n}=C_2r^n.
\end{eqnarray*}
For the last inequality we have applied the estimate \eqref{EQ:dims1}.\\

Thus
\beq\|\sigma^1(x,D)(\phi f)\|_{L^2}^2\leq C \|\sigma^1\|_{l_0;\mathscr{S}_{1-a,\delta}^{-\frac{na}{2}}}^2\|f\|_{L^\infty}^2 |B(x_0,r)|.\eq
By the Cauchy-Schwarz inequality we get
\beq\frac{1}{|B(x_0,r)|}\int\limits_{B}|\sigma^1(x,D)(\phi
f)(x)|dx\leq\label{eq:s}\eq
$$\leq\left(\frac{1}{|B|}\int\limits_{B}|\sigma^1(x,D)(\phi  f)(x)|^2 dx\right)^\half$$
$$ \leq C \|\sigma^1\|_{l_0;\mathscr{S}_{1-a,\delta}^{-\frac{na}{2}}}\|f\|_{L^\infty}. $$
\noi This proves the desired estimated for $I$.\\

\indent For the estimation of $II$, we begin by writing $\left[\phi,\sigma^1(x,D)\right]f(x) $ in the form 
\[\theta (x,D)f(x),\] 
where $\theta (x,\xi)$ is a suitable symbol. To calculate $\theta(x,\xi)$ we write $B:=\sigma^1(x,D)$, consider the convolution 
 kernel $k_x$ of $B$ and observe that
\begin{align*}
\left[\phi,\sigma^1(x,D)\right]f(x)&=\phi(x)Bf(x)-B(\phi f)(x)\\
&=\phi(x)(f\ast k_x)(x)-((\phi f)\ast k_x)(x)\\
&=\int\limits_G\phi(x)f(xy^{-1})k_x(y)dy-\int\limits_G\phi(xy^{-1})f(xy^{-1})k_x(y)dy\\
&=\int\limits_Gf(xy^{-1})\left(\phi(x)k_x(y)-\phi(xy^{-1})k_x(y)\right)dy.\\
\end{align*}
Hence, $\theta(x,\xi)$ is given by 
\[\theta(x,\xi)=\int\limits_G\left(\phi(x)k_x(y)-\phi(xy^{-1})k_x(y)\right)\xi (y)^*dy.\]

On the other hand, by using Taylor expansions on compact Lie groups (see e.g. \cite{rt:book} or \cite{Ruzhansky-Turunen-Wirth:arxiv}), we can write
\[\phi(xy^{-1})=\phi(x)+\sum\limits_{|\alpha|=1}\psi_{\alpha}(x,y)q_{\alpha}(y),\]
where $\psi_{\alpha}\in C^{\infty}(G\times G)$, $q_{\alpha}\in C^{\infty}(G)$, $q_{\alpha}(e)=0$.
Hence
\begin{align*}
\theta(x,\xi)&=\sum\limits_{|\alpha|=1}\int\limits_G\psi_{\alpha}(x,y)q_{\alpha}(y)k_x(y)\xi (y)^*dy\\
&=\sum\limits_{|\alpha|=1}\Delta_{\psi_{\alpha}(x,\cdot)}\Delta_{q_{\alpha}}\sigma(x,\xi).
\end{align*}
Thus 
\beq\|\theta(x,\xi)\|_{op}\leq C\langle\xi\rangle ^{-\frac{na}{2}-(1-a)}.\label{theta1aw}\eq

For the last inequality we have used the following estimate:
\[
\sup_{[\xi]\in\widehat G}\|\Delta_{\psi_{\alpha}(x,\cdot)}\tau(\xi)\|_{op}\leq C\|\psi_{\alpha}(x,\cdot)\|_{C^{k}} \sup_{[\xi]\in\widehat G}\|\tau(\xi)\|_{op},\]
for $k>\frac{n}{2}$. We now write

\[\theta(x,\xi)=\sum\limits_{j=0}^\infty \theta_j(x,\xi),\]  
\noi with $\theta_j(x,\xi)$ supported in $\langle\xi\rangle \sim
2^{j}r^{-1}.$\\

By \eqref{theta1aw} and from the inequalities for the support of $\theta_j$ one has 
 $$\langle\xi\rangle ^{-(1-a)}\leq C2^{-j(1-a)}$$ and 
\[\|\theta_j\|_{l;\mathscr{S}_{1-a,\delta}^{-\frac{na}{2}}}\leq C_l 2^{-j(1-a)}\|\sigma\|_{l;\mathscr{S}_{1-a,\delta}^{-\frac{na}{2}}},\]
 for all $ l$. \\

\indent According to Lemma \ref{l1wa} applied to the symbols $\theta_j$ and $l>\frac{n}{2}$ we have 
\beq\|\left[\phi,\sigma^1(x,D)f\right]\|_{L^\infty}\leq \sum\limits_{j=0}^\infty\|\theta_j(x,D)f\|_{L^\infty}\label{eq:s3}\eq
$$\leq  \sum\limits_{j=0}^\infty
C 2^{-j(1-a)}\|\sigma\|_{l;\mathscr{S}_{1-a,\delta}^{-\frac{na}{2}}} \|f\|_{L^\infty}$$
$$\leq C \|\sigma\|_{l;\mathscr{S}_{1-a,\delta}^{-\frac{na}{2}}} \|f\|_{L^\infty}.$$ 
\noi Since $\phi\geq 1$ on $B(x_0,\delta)$, by using (\ref{eq:s}) and (\ref{eq:s3})
into (\ref{eq:dec}) we have
$$\frac{1}{|B(x_0,\delta)|}\int\limits_{B}|\sigma^1(x,D)f(x)|dx\leq
\frac{1}{|B(x_0,\delta)|}\int\limits_{B}|\phi(x)\cdot\sigma^1(x,D)f(x)|dx$$
$$\leq C\|\sigma\|_{l;\mathscr{S}_{1-a,\delta}^{-\frac{na}{2}}} \|f\|_{L^\infty}, $$
which concludes the proof.
\end{proof}
We now establish a theorem for symbols in $\mathscr{S}_{\rho,\delta}^m(G)$. We recall that the $L^2$ boundedness holds for operators with symbols in $\mathscr{S}_{\rho,\delta}^0(G)$ (cf.   \cite[Proposition 8.1]{vf:psecl}). As a consequence of real interpolation between $L^2$ boundedness and the previous $L^{\infty}-BMO$ boundedness we have:

\begin{thm}\label{t1l2} 
Let $G$ be a compact Lie group of dimension $n$ and let $0<\rho<1$. Let $0\leq\delta<\rho$ and $\sigma\in \mathscr{S}_{\rho,\delta}^{-\frac{n(1-\rho)}{2}}(G)$. Then $\sigma(x,D)$ extends to a bounded operator from $L^{p}(G)$ to $L^{p}(G)$ for $1< p<\infty$. 
\end{thm}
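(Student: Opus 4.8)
The plan is to obtain the full range $1<p<\infty$ by interpolating the $L^{\infty}(G)\to BMO(G)$ estimate of Theorem \ref{mth1a} against an $L^2$ bound, and then to descend below $p=2$ by duality. First I would record the endpoint at $p=2$: since $0\leq\delta<\rho<1$, the order satisfies $-\frac{n(1-\rho)}{2}<0$, so $\sigma\in\mathscr{S}_{\rho,\delta}^{-\frac{n(1-\rho)}{2}}(G)\subset\mathscr{S}_{\rho,\delta}^{0}(G)$, and the $L^2$-boundedness for operators of order zero (\cite[Proposition 8.1]{vf:psecl}) gives that $\sigma(x,D)\colon L^2(G)\to L^2(G)$ is bounded. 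Together with the $L^{\infty}(G)\to BMO(G)$ bound of Theorem \ref{mth1a}, this furnishes the two endpoints I need.

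Next I would invoke the interpolation theorem between $L^2$ and $BMO$. Here $G$, equipped with the geodesic distance $d$ and the normalised Haar measure, is a compact space of homogeneous type, so the Calder\'on--Zygmund interpolation scale applies: an operator that is bounded on $L^2(G)$ and from $L^{\infty}(G)$ into $BMO(G)$ is bounded on $L^{p}(G)$ for every $2\leq p<\infty$. Applying this to $\sigma(x,D)$ yields $L^{p}$-boundedness for all $p$ in the upper range $2\leq p<\infty$.

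To cover $1<p<2$ I would pass to the adjoint. Since $\delta<\rho$, the symbolic calculus on $G$ (see \cite{rt:book,rt:groups,vf:psecl}) guarantees that the formal adjoint $\sigma(x,D)^{*}$ is again a pseudo-differential operator whose symbol $\sigma^{*}$ lies in the same class $\mathscr{S}_{\rho,\delta}^{-\frac{n(1-\rho)}{2}}(G)$. Applying the bound already established in the upper range to $\sigma^{*}(x,D)$ shows that $\sigma(x,D)^{*}$ is bounded on $L^{p'}(G)$ for all $2\leq p'<\infty$; since $\|\sigma(x,D)\|_{\mathcal{L}(L^p)}=\|\sigma(x,D)^{*}\|_{\mathcal{L}(L^{p'})}$ with $\frac1p+\frac1{p'}=1$, duality then gives boundedness of $\sigma(x,D)$ on $L^{p}(G)$ for all $1<p\leq 2$. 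Combining the two ranges yields boundedness on $L^{p}(G)$ for every $1<p<\infty$.

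The step I expect to be the main obstacle is the interpolation: one must make sure that the single $BMO$ endpoint genuinely interpolates with the $L^2$ endpoint in the homogeneous-space structure $(G,d,dx)$, i.e. that the Fefferman--Stein/Stampacchia-type interpolation theorem is available in this geometric setting, since it is precisely what allows the $BMO$ endpoint to replace the usual $L^\infty$ endpoint. A secondary point requiring care is the stability of the symbol class under taking adjoints, which is exactly where the hypothesis $\delta<\rho$ is used, as it is what makes the asymptotic symbolic calculus converge within $\mathscr{S}_{\rho,\delta}$.
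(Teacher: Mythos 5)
Your proposal is correct and follows essentially the same route as the paper's proof: the $L^{\infty}(G)\to BMO(G)$ bound of Theorem \ref{mth1a} combined with the $L^{2}$-boundedness from \cite[Proposition 8.1]{vf:psecl} and interpolation gives the range $2\leq p<\infty$, and duality via the adjoint (whose symbol stays in the class because $\delta<\rho$) gives $1<p\leq 2$. If anything, your handling of the adjoint is slightly more careful than the paper's, which loosely states $\sigma_{A^*}\in\mathscr{S}_{\rho,\delta}^{0}(G)$ when what is actually needed (and true) is that $\sigma_{A^*}$ lies in $\mathscr{S}_{\rho,\delta}^{-\frac{n(1-\rho)}{2}}(G)$ so that the upper range applies to $A^{*}$ as well.
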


\begin{proof} 
%As before we write $a=1-\rho$.
We write $A=\sigma(x,D)$. The symbol $\sigma$ satisfies the condition of Theorem \ref{mth1a}. Hence $A$ is bounded from $L^{\infty}(G)$ to $BMO(G)$.  
 Moreover $A$ is bounded from $L^{2}(G)$ to $L^2(G)$. This implies the boundedness of $A$ from $L^{p}(G)$ to $L^{p}(G)$ for $2\leq p\leq\infty$.
 On the other hand, since $\sigma_{A^*}\in\mathscr{S}_{\rho,\delta}^0(G)$ then $A^*:L^{p}(G)\rightarrow L^{p}(G)$ is bounded for $2\leq p\leq\infty$ and hence by duality we get that $A:L^{p}(G)\rightarrow L^{p}(G)$ is bounded also for $1\leq p\leq 2$.
\end{proof}
\begin{rem}\label{lare1a} The index $\frac{n(1-\rho)}{2}$ in Theorem \ref{t1l2} can not be improved, i.e., if one takes instead an index $\nu_0<\frac{n(1-\rho)}{2}$,
 one only gets $L^p$ boundedness for some finite interval around $p=2$ and not for any $p$ outside that interval. This situation will be explained in more detail by the next theorem and Remark \ref{lare1}.
\end{rem}

We now apply the complex interpolation for an analytic family of operators (cf. \cite{ste:int}, \cite{ste-we:fa}) to obtain $L^p$ bounds for orders $\nu$ with $0\leq\nu\leq\frac{n(1-\rho)}{2}$:

\begin{thm}\label{tci} Let $G$ be a compact Lie group of dimension $n$ and let $0<\rho<1$. Let $0\leq\delta<\rho$ and $\sigma\in \mathscr{S}_{\rho,\delta}^{-\nu}$ with $$0\leq\nu <\frac{n(1-\rho)}{2}.$$ Then $\sigma(x,D)$ extends to a bounded operator from $L^{p}(G)$ to $L^{p}(G)$ for $$\left|\frac{1}{p}-\half\right|\leq \frac{\nu}{n(1-\rho)} .$$ 
\end{thm}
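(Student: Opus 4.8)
The plan is to derive Theorem \ref{tci} from the endpoint result Theorem \ref{mth1a} together with the $L^2$-boundedness of order-zero operators, by Stein's complex interpolation for an analytic family of operators. Since $\sigma$ has order $-\nu$ with $0\le\nu<\frac{n(1-\rho)}{2}$, I would first build a family connecting the order $0$ situation (where $L^2$-boundedness is available) with the critical order $-\frac{n(1-\rho)}{2}$ situation (where Theorem \ref{mth1a} gives $L^\infty(G)\to BMO(G)$ bounds). Concretely, on the strip $\{0\le \Re z\le 1\}$ I set
$$
\alpha(z)=\nu-\frac{n(1-\rho)}{2}\,z,\qquad
T_z:=\sigma(x,D)\,(I-\lap)^{\alpha(z)/2},
$$
so that $T_z$ has matrix symbol $\langle\xi\rangle^{\alpha(z)}\sigma(x,\xi)$. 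The real part of the order of $T_z$ equals $-\nu+\Re\alpha(z)$, which is $0$ on $\Re z=0$ and $-\frac{n(1-\rho)}{2}$ on $\Re z=1$. Moreover $\alpha(z_0)=0$ precisely for $z_0=\frac{2\nu}{n(1-\rho)}\in[0,1)$, hence $T_{z_0}=\sigma(x,D)$ is the operator we wish to bound.

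Next I would verify that $\{T_z\}$ is an admissible analytic family. Using the functional calculus of \cite{Ruzhansky-Wirth:functional-calculus}, the multiplier $\langle\xi\rangle^{\alpha(z)}$ is the symbol of $(I-\lap)^{\alpha(z)/2}\in\Op\mathscr{S}_{1,0}^{\Re\alpha(z)}(G)$, and applying the difference operators $\D_\xi^\gamma$ to the unimodular factor $\langle\xi\rangle^{i\Im\alpha(z)}$ produces quantities that grow only polynomially in $\Im z$. Combined with the symbolic calculus (a product of a symbol in $\mathscr{S}_{1,0}$ with one in $\mathscr{S}_{\rho,\delta}$ stays in $\mathscr{S}_{\rho,\delta}$), this gives $\langle\xi\rangle^{\alpha(z)}\sigma\in\mathscr{S}_{\rho,\delta}^{-\nu+\Re\alpha(z)}(G)$ with seminorms bounded by $C(1+|\Im z|)^{N}\|\sigma\|_{l;\mathscr{S}_{\rho,\delta}^{-\nu}}$. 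Analyticity of $z\mapsto(T_zf,g)$ on a dense class of $f,g$ (for instance finite linear combinations of matrix coefficients, so that the defining series is finite) is then immediate, since $\langle\xi\rangle^{\alpha(z)}$ is entire in $z$ for each fixed $\xi$.

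With these preparations the boundary estimates read as follows: on $\Re z=0$ the symbol has order $0$, so $T_{it}:L^2(G)\to L^2(G)$ is bounded with norm $\le C(1+|t|)^{N}$ by the $L^2$-boundedness of order-zero operators (cf. \cite[Proposition 8.1]{vf:psecl}); on $\Re z=1$ the symbol has order $-\frac{n(1-\rho)}{2}$, so Theorem \ref{mth1a} gives $T_{1+it}:L^\infty(G)\to BMO(G)$ with norm $\le C(1+|t|)^{N}$. Both bounds have admissible growth, so Stein's complex interpolation theorem in the form allowing the $BMO$ endpoint (cf. \cite{ste:int,ste-we:fa} and the Fefferman--Stein duality $(H^1)'=BMO$) applies at $z_0$, yielding $\sigma(x,D)=T_{z_0}:L^{p_+}(G)\to L^{p_+}(G)$ with $\frac{1}{p_+}=\frac{1-z_0}{2}=\half-\frac{\nu}{n(1-\rho)}$.

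Finally I would obtain the symmetric endpoint and fill in the interval. Since $\delta<\rho$, the class $\mathscr{S}_{\rho,\delta}^{-\nu}(G)$ is closed under taking adjoints, so $\sigma(x,D)^*$ is also bounded on $L^{p_+}(G)$, and duality yields boundedness of $\sigma(x,D)$ on $L^{p_-}(G)$ with $\frac{1}{p_-}=\half+\frac{\nu}{n(1-\rho)}$. Interpolating (Riesz--Thorin) the single operator $\sigma(x,D)$ between $L^{p_-}(G)$ and $L^{p_+}(G)$, which both sandwich $p=2$, then gives boundedness on $L^p(G)$ for every $p$ with $\left|\frac1p-\half\right|\le\frac{\nu}{n(1-\rho)}$, as claimed. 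The main obstacle I anticipate is the bookkeeping for admissible growth in the second step: one must check that the imaginary powers $(I-\lap)^{is/2}$ have matrix symbols whose difference-operator seminorms grow at most polynomially in $s$, and that this polynomial control survives multiplication by $\sigma$ in the calculus; the interpolation against the $BMO$ endpoint (rather than $L^\infty$) is standard but also needs the analytic-family version to be invoked with care.
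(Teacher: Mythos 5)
Your proposal is correct and takes essentially the same approach as the paper: an analytic family with symbols $\langle\xi\rangle^{\text{linear in }z}\sigma(x,\xi)$, interpolated between the $L^2$ bound for order-zero symbols (\cite[Proposition 8.1]{vf:psecl}) on one boundary and the $L^\infty(G)\to BMO(G)$ bound of Theorem \ref{mth1a} on the other, followed by duality and interpolation to fill the symmetric interval around $p=2$. The only cosmetic difference is that the paper inserts the damping factor $e^{z^2}$ into the symbols so that the family is uniformly bounded, whereas you keep the undamped family and invoke the admissible-growth (polynomial in $\Im z$) version of Stein's theorem; both devices are standard and equivalent here.
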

\begin{proof} Let $\sigma\in \mathscr{S}_{1-a,\delta}^{-\nu}(G)$. We consider the family of operators $\{T_z\}_{0\leq \Re\, z\leq 1}$ defined by the symbols
\[\gamma_z(x,\xi):=e^{z^2}\sigma(x,\xi)\langle\xi\rangle^{\nu+\frac{na}{2}(z-1)},\] 
where the operators $T_z$ are defined via the global quantization \eqref{EQ:A-quant}. One can verify that for every $z=t+is$ such that $0\leq t\leq 1$, $s\in\ar$ and $k\in\ene$, we have
\[\|\gamma_z\|_{k,\mathscr{S}_{1-a,\delta}^{0}}\leq e^{t^2-s^2}p(|z|)\|\sigma\|_{k, \mathscr{S}_{1-a,\delta}^{-\nu}},\]
where $p(\lambda)$ is a polynomial of degree $k$.

Since $0\leq t\leq 1, -\infty<s<\infty$ and $e^{s^2}$ dominates $|z|^k$ for $z\in\{0\leq\Re z\leq 1\}$, there exists a constant $C_k>0$ independent of $z$ such that
\[e^{t^2-s^2}p(|z|)\leq C_k.\]
Hence
\beq\label{sem1}\|\gamma_z\|_{k,\mathscr{S}_{1-a,\delta}^{0}}\leq C_k\|\sigma\|_{k, \mathscr{S}_{1-a,\delta}^{-\nu}},\eq
with $C_k>0$ independent of $z$. More precisely, $C_k$ only depends on finitely many semi-norms of $\sigma$.

By the $L^2$ boundedness for $\mathscr{S}_{\rho,\delta}^{0}$ classes there exist a constant $C>0$ and an integer $N$ such that
\beq\label{sem1a}\|T_zf\|_{L^2(G)}\leq C\|\gamma_z\|_{N,\mathscr{S}_{1-a,\delta}^{0}}\|f\|_{L^2(G)}.\eq
From \eqref{sem1} and \eqref{sem1a} we obtain 
 \[\|T_zf\|_{L^2(G)}\leq C_1\|\sigma\|_{N, \mathscr{S}_{1-a,\delta}^{-\nu}}\|f\|_{L^2(G)},\]
for a suitable constant $C_1>0$.

It is clear that the family $\{T_z\}_{0\leq \Re\, z\leq 1}$ is analytic in the strip $$S=\{z=x+iy\in\ce:0<x<1\}$$ and continuous in $\overline{S}$. Thus, the family $\{T_z\}_{0\leq \Re\, z\leq 1}$ defines an analytic family of operators uniformly bounded on $\mathcal{L}(L^2(G), L^2(G))$. In order to apply the complex interpolation we observe that
\[\sup\limits_{-\infty<s<\infty}\|T_{1+is}f\|_{L^2(G)}\leq C_1\|\sigma\|_{N, \mathscr{S}_{1-a,\delta}^{-\nu}}\|f\|_{L^2(G)},\,\,f\in L^2(G),\]
where $C_1$ is independent of $f$.\\

On the other hand 
\[T_{is}f(x)=\int_G\sum\limits_{[\xi]\in \widehat{G}}d_{\xi}\Tr(\xi(y^{-1}x)e^{-s^2}\sigma(x,\xi)\langle\xi\rangle^{\nu-\frac{na}{2}+i\frac{na}{2}s})f(y)dy.\]
Since
\[\gamma_{is}(x,\xi)=e^{-s^2}\sigma(x,\xi)\langle\xi\rangle^{\nu}\langle\xi\rangle^{-\frac{na}{2}}\langle\xi\rangle^{i\frac{na}{2}s},
\]
and 
\[\langle\xi\rangle^{i\frac{na}{2}s}I_{d_{\xi}}\in\mathscr{S}_{1-a,\delta}^{0} \]
(see \cite{Ruzhansky-Wirth:Lp-Z}),
we have
\[\gamma_{is}\in\mathscr{S}_{1-a,\delta}^{-\frac{na}{2}}.\]
Moreover
\[\|\gamma_{is}\|_{k,\mathscr{S}_{1-a,\delta}^{-\frac{na}{2}}}\leq C\|\sigma\|_{k, \mathscr{S}_{1-a,\delta}^{-\nu}},\]
with $C>0$ independent of $s$.

An application of Theorem \ref{mth1a} to the operator $T_{is}$ gives
\[\|T_{is}f\|_{BMO(G)}\leq C\|\sigma\|_{k, \mathscr{S}_{1-a,\delta}^{-\nu}}\|f\|_{L^{\infty}(G)}.\]
The complex interpolation for an analytic family of operators gives us
 \[\|T_{t}f\|_{L^{p}(G)}\leq C_p\|\sigma\|_{k, \mathscr{S}_{1-a,\delta}^{-\nu}}\|f\|_{L^{p}(G)},\]
where $p=\frac 2t,\,0<t\leq 1$. The corresponding symbol of the operator $T_t$ is given by
\[\gamma_{t}(x,\xi)=e^{-t^2}\sigma(x,\xi)\langle\xi\rangle^{\nu+\frac{na}{2}(t-1)}.\]
Since $0\leq\nu\leq\frac{na}{2}$, there exists $t,\,0\leq t\leq 1$ such that
\[\nu=\frac{na}{2}(1-t).\]
Hence the operator $A=\sigma(x,D)$ is bounded from $L^p(G)$ into $L^p(G)$ for $p=\frac 2t$ and
\[\|\sigma(x,D)f\|_{L^p(G)}\leq C_p\|\sigma\|_{k, \mathscr{S}_{1-a,\delta}^{-\nu}}\|f\|_{L^{p}(G)}.\]
We note that 
\[\nu=\frac{na}{2}\p{1-\frac 2p}=na\p{\half-\frac 1p}.\]
By interpolation between $p=2$ and $p=\frac 2t$ we obtain the $L^p(G)$ boundedness for $p$ verifying
\[\half-\frac 1p\leq \frac{\nu}{na}.\]
We can now apply a duality argument for the case $1\leq p\leq 2$. Since the symbol of the operator $A^*=\sigma(x,D)^*$ also belongs to
 $\mathscr{S}_{1-a,\delta}^{-\nu}$ we have $A^*:L^p(G)\rightarrow L^p(G),$
for $\half-\frac 1p\leq \frac{\nu}{na}.$ Then 
\[A:L^p(G)\rightarrow L^p(G)\]
for $\frac 1p-\half\leq \frac{\nu}{na}.$ 
Therefore $A:L^p(G)\rightarrow L^p(G)$ is bounded for
\[\left|\frac 1p-\half\right|\leq \frac{\nu}{na}.\]
\end{proof}

\begin{rem}\label{lare1} 
The index $\frac{n(1-\rho)}{2}$ in Theorem \ref{tci} is sharp. Indeed,  for $G=\mathbb{T}^1$, if $\nu_0 <\frac{1-\rho}{2}$ one only  gets boundedness on finite intervals around $p=2$. This is a consequence of the well-known classical multiplier theory on the torus (cf. \cite{hi:mult}) and Wainger (cf. \cite{wai:trig}). Indeed, let $G=\mathbb{T}^1$ and $0<\rho<1 ,\, 0<\nu_0< \frac{1-\rho}{2}$ and  consider
\[\sigma(\xi)=\frac{e^{i\langle\xi\rangle^a}}{\langle\xi\rangle^{{\nu}_0}},\]
for $\xi\in\zet$.  Then $\sigma\in \mathscr{S}_{\rho,0}^{-\nu_0}(\mathbb{T}^1) $ and the corresponding operator $\sigma(D)$ is bounded on $L^p(\mathbb{T}^1)$ for the interval $$\left|\frac{1}{p}-\half\right|<\frac{{\nu}_0}{a}$$ and is not bounded for $p$ outside that interval. The $L^p$ boundedness inside the interval with centre at $p=2$ can also be obtained from the general result in Theorem \ref{tci}.
\end{rem}

\medskip
 
\noindent{\bf{Acknowledgements}}

\medskip
 
The authors would like to thank the referees 
for several suggestions leading to the improvement of the manuscript.

%\bibliographystyle{alphaabbr}
%\bibliography{bib-Delgado-Lp-2017}

%\end{document}
%\bibliographystyle{elsarticle-harv}

\end{document}